\newtheorem{theorem}{Theorem}
\newtheorem{prop}{Proposition}
\newtheorem{lemma}{Lemma}
\newcommand{\Hg}{\mathcal{H}}
\newcommand{\F}{\mathbb{F}}
\newcommand{\Z}{\mathbb{Z}}
\newcommand{\Q}{\mathbb{Q}}
\newcommand{\ord}{\mathrm{o}}
\newcommand{\C}{\mathscr{C}}
\newcommand{\G}{\mathscr{G}}
\newcommand{\A}{\mathscr{A}_{F}^{(p^\ell)}}
\begin{document}
\title{Extensions of degree $p^\ell$ of a $p$-adic field}
\author[M.R.Pati]{Maria Rosaria Pati}
\address{Dipartimento di Matematica, Universit\`{a} di Pisa\\
           Largo Pontecorvo 5\\
           56127 Pisa, Italia}
\email{pati@mail.dm.unipi.it}
\subjclass[2010]{11S05, 11S15}
\keywords{$p$-adic fields, Isomorphism classes of extensions, Galois theory, Ramification theory}
\begin{abstract}
Given a $p$-adic field $K$ and a prime number $\ell$, we count the total number of the isomorphism classes of $p^\ell$-extensions of $K$ having no intermediate fields. Moreover for each group that can appear as Galois group of the normal closure of such an extension, we count the number of isomorphism classes that contain extensions whose normal closure has Galois group isomorphic to the given group.
Finally we determine the ramification groups and the discriminant of the composite of all $p^\ell$-extensions of K with no intermediate fields.\\
The principal tool is a result, proved at the beginning of the paper, which states that there is a one-to-one correspondence between the isomorphism classes of extensions of degree $p^\ell$ of $K$ having no intermediate extensions and the irreducible $H$-submodules of dimension $\ell$ of $F^*/{F^*}^p$, where $F$ is the composite of certain fixed normal extensions of $K$ and $H$ is its Galois group over $K$.
\end{abstract}
\maketitle
\section{Introduction}
It is well known that a $p$-adic field $K$ has only a finite number of non-isomorphic algebraic extensions with given degree \cite{Kr}.
 We want to classify these extensions up to $K$-isomorphism in the totally and wildly ramified case, which is the only one slightly difficult to treat.\\
 In 1966 Krasner \cite{Kr} obtained an explicit formula for the total number of extensions of $K$ with given ramification index and inertial degree. Later, Serre \cite{Ser1} also computed the number of extensions using a different method in the proof of his famous \textquotedblleft mass formula''. In \cite{PR} Pauli and Roblot, with a more computational approach, gave another proof for the formula counting the number of extensions of a given degree and discriminant.\\
A new progress in the research was made in 2004, when Hou and Keating \cite{HK} considered the problem of determining the number of isomorphism classes of extensions of $K$ with given ramification index $e$ and inertial degree $f$; they found general formulas when $p^2\nmid e$ and, under some additional assumptions on $e$ and $f$, also when $p^2\parallel e$. This question was definitively closed by Monge in 2011 \cite{Mon}.\\
In 2007 Dvornicich and Del Corso \cite{DD}, looking at the isomorphism classes of extensions of $K$ of degree $p$, gave a new way to attach the problem of counting extensions: their idea is to \textquotedblleft shift" the $p$-extensions of $K$ in a more easy environment, where they can be identified by the action of a certain group on a suitable space. This new method has been used also by Dalawat \cite{Dal}, who extended it to the case of local fields of characteristic $p$.\\
Recently in \cite{DDM}, Del Corso, Dvornicich and Monge, taking inspiration from \cite{DD}, present a general and very useful way to study the extensions of degree $p^k$ of a $p$-adic field having no intermediate extensions. Denoting by $F$ the composite of all normal and tame extensions of $K$ whose Galois group is a subgroup of $\mathrm{GL}(k,\F_p)$ and $H=\mathrm{Gal}(F/K)$, they show that there exists a one-to-one correspondence between the possible extensions $\tilde L /K$ that appear as normal closure of extensions $L/K$ of degree $p^k$ having no intermediate fields and the irreducible $\F_p[H]$-submodules of $F^*/{F^*}^p$ of dimension $k$.\\
This key result allows to classify the extensions of $K$ of $p$-power degree only by studying the structure of the filtered $\F_p[H]$-module $F^*/{F^*}^p$; in other words, our problem is reduced to find the irreducible representation of dimension $k$ of a certain group $H$ acting on a suitable module $F^*/{F^*}^p$. This is practicable in the case in which $k$ is a prime number $\ell$, while in the general case this method is not indeed usable since the number of the possible representations increase with the number of the possible decomposition of $k$ as product of positive integers. The author intend to show the application of the same method to study the first non prime case $k=4$ in a forthcoming paper .\\
Specializing to the extensions of degree $p^\ell$, we obtain an improvement of the general theorem on the correspondence between isomorphism classes and irreducible modules, showing that in this case we can substitute $F$ with a smaller field, that is the composite of all normal extensions of $K$ of degree prime to $p$ whose Galois group is isomorphic to a subgroup of $\F_{p^\ell}^*$ not contained in $\F_p^*$ or to a non abelian subgroup of $\F_{p^\ell}^*\rtimes_\theta \mathrm{Gal}(\F_{p^\ell}/\F_p)$ where $\theta (\phi _p)=:{\phi _p}_{|\F_{p^\ell}^*}\in \mathrm{Aut}(\F_{p^\ell}^*)$ ($\phi _p$ the Frobenius automorphism).\\
Using this result, we classify the $p^\ell$-extensions of $K$ having no intermediate fields with respect to the Galois group of the normal closure, and count their total number up to $K$-isomorphism.\\
Finally, as a further application of the correspondence theorem, we determine the ramification groups and the discriminant of the composite of all extensions of degree $p^\ell$ of $K$ having no intermediate fields.

\section{Notation}
Let $p$ and $\ell$ prime numbers. For a $p$-adic field $K$ we denote by $e_K$ and $f_K$ the ramification index and the inertial degree of $K/\Q_p$ respectively, and by $n_K=e_K f_K=[K:\Q_p]$ the absolute degree of $K$.
We also denote by $\pi_K$ a uniformizer of $K$, by $\kappa_K$ its residue field and put $q_K=|\kappa_K|$.\\
If $E/K$ is a finite extension then $e_{E/K}$ and $f_{E/K}$ are the ramification index and the inertial degree of the extension; if $E/K$ is Galois with $\mathrm{Gal}(E/K)=G$ then $G=G_{-1}\supseteq G_0\supseteq G_1\supseteq \dots\supseteq \{1\}$ is, as usual, the lower numbering ramification filtration. In particular, $G_0$ is the inertia group and its fixed field $E^{\mathrm{ur}}$ is the maximal unramified subextension of $E/K$, while $G_1$ is the unique $p$-Sylow subgroup of $G_0$ and its fixed field is the maximal tame ramified subextension of $E/K$.\\
Finally we shall denote by $\C_{p^\ell}$ the composite of all extensions of $K$ of degree $p^\ell$ having no intermediate field.

\section{The correspondence theorem}
Let $F=F(K)$ be the composite of all normal extensions of $K$ of degree prime to $p$ whose Galois group is isomorphic to a subgroup of $\F_{p^\ell}^*$ not contained in $\F_p^*$ or to a non abelian subgroup of $\F_{p^\ell}^*\rtimes_\theta \mathrm{Gal}(\F_{p^\ell}/\mathbb{F}_p)$ where $\theta (\phi _p)=\phi _p|_{\F_{p^\ell}^*}\in \mathrm{Aut}(\F_{p^\ell}^*)$ ($\phi_p$ the Frobenius automorphism), and let $H=\mathrm{Gal}(F/K)$. Then
\begin{theorem}\label{mainimpr}
There exists a one to one correspondence between the isomorphism classes of extensions of degree $p^\ell$ having no intermediate extensions and the irreducible $H$-submodules of dimension $\ell$ of $F^*/{F^*}^p$.
\end{theorem}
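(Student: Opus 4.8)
The plan is to deduce the statement from the general correspondence of Del Corso, Dvornicich and Monge \cite{DDM}, taken with $k=\ell$, the only new point being that, because $\ell$ is prime, the field $F'$ occurring there --- the composite of \emph{all} normal tame extensions of $K$ with Galois group a subgroup of $\mathrm{GL}(\ell,\F_p)$ --- may be replaced by the smaller field $F$. First I would recast the \cite{DDM} correspondence. Given $L/K$ of degree $p^\ell$ with no intermediate field and normal closure $\tilde L/K$, put $G=\mathrm{Gal}(\tilde L/K)$; by \cite{DDM}, $G$ contains a normal elementary abelian $p$-subgroup $V$ of order $p^\ell$, the quotient $\Gamma:=G/V$ acts faithfully and irreducibly on $V$, the field $M:=\tilde L^{V}$ is tamely ramified over $K$ with $\mathrm{Gal}(M/K)\cong\Gamma$, and $L=\tilde L^{U}$ with $U$ a complement of $V$ in $G$. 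Since $\gcd(|\Gamma|,p)=1$ we have $H^{1}(\Gamma,V)=0$, so all complements of $V$ in $G$ are conjugate --- whence $\tilde L$ determines $L$ up to $K$-isomorphism --- and any subgroup $S$ with $U\subsetneq S\subsetneq G$ equals $(S\cap V)\rtimes U$ with $S\cap V$ a proper nonzero $\Gamma$-submodule of $V$, contradicting irreducibility; so $U$ is maximal and $L$ has no intermediate field. Hence isomorphism classes of such $L$ correspond bijectively to the normal closures $\tilde L$ that occur, which by \cite{DDM} correspond bijectively to the irreducible $\ell$-dimensional $H'$-submodules of ${F'}^{*}/{{F'}^{*}}^{p}$, $H'=\mathrm{Gal}(F'/K)$.

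It then suffices to show that $M=\tilde L^{V}$ always lies in $F$; equivalently, that $\Gamma$ is isomorphic to a subgroup of $\F_{p^\ell}^{*}$ not contained in $\F_p^{*}$, or to a non-abelian subgroup of $\F_{p^\ell}^{*}\rtimes_{\theta}\mathrm{Gal}(\F_{p^\ell}/\F_p)$. Being the Galois group of a tame extension of $K$, $\Gamma$ is a quotient of $\langle\tau\rangle\rtimes\langle\varphi\rangle$ with $\varphi\tau\varphi^{-1}=\tau^{q_K}$, where $\tau,\varphi$ are the images of a generator of tame inertia and of a Frobenius; set $A=\langle\tau\rangle$, a cyclic normal subgroup of order prime to $p$ with $\Gamma/A$ cyclic. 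I would then restrict the irreducible $\F_p[\Gamma]$-module $V$ to $A$: as $\F_p[A]$ is semisimple, $V|_A$ is a direct sum of irreducible $\F_p[A]$-modules of a common $\F_p$-dimension $d$ (Clifford), and $d\mid\ell$ forces $d\in\{\ell,1\}$. If $d=\ell$, then $V|_A$ is irreducible, so $E:=\F_p[A]$ is a field $\cong\F_{p^\ell}$ over which $V$ is one-dimensional; thus $A\subseteq E^{*}$, and $\Gamma$ --- normalising $A$, hence the subring $E\subseteq\mathrm{End}_{\F_p}(V)$ --- acts on $E$ by $\F_p$-algebra automorphisms with kernel $\Gamma\cap E^{*}$ (because $\mathrm{End}_{E}(V)=E$), so $\Gamma/(\Gamma\cap\F_{p^\ell}^{*})\hookrightarrow\mathrm{Gal}(\F_{p^\ell}/\F_p)$ and $\Gamma$ embeds into $\F_{p^\ell}^{*}\rtimes_{\theta}\mathrm{Gal}(\F_{p^\ell}/\F_p)$; if $\Gamma$ is abelian it lies in $\F_{p^\ell}^{*}$ (a non-trivial Galois part cannot commute with a generator of $E$), and in no case inside $\F_p^{*}$, since scalars act reducibly on $V$ (recall $\ell\ge 2$).

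The case $d=1$ is the one I expect to require the real work. Here $A$ acts on $V$ through characters valued in $\F_p^{*}$. If these all coincide, $A$ is scalar, hence central in $\Gamma$, so $\Gamma=\langle A,\varphi\rangle$ is abelian --- which falls under the previous case. Otherwise there are exactly $\ell$ distinct character subspaces, giving $V=L_1\oplus\dots\oplus L_\ell$ with $A$-stable lines $L_i$ permuted by $\Gamma$; let $N\trianglelefteq\Gamma$ be the subgroup fixing each $L_i$, so $A\subseteq N$, $N$ consists of diagonal matrices, and $\Gamma/N$ --- a transitive and (as a quotient of the cyclic $\Gamma/A$) cyclic subgroup of $S_\ell$ --- is cyclic of order $\ell$. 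Since $\tau\in N$ is diagonalisable over $\F_p$, $|\tau|$ divides $p-1$, so $q_K=p^{f_K}\equiv 1\pmod{|\tau|}$ and $\varphi$ centralises $\tau$; as $N=\langle\tau,\varphi^{\ell}\rangle$ (its quotient $N/A$ being the index-$\ell$ subgroup of the cyclic $\Gamma/A=\langle\varphi A\rangle$), $\varphi$ centralises all of $N$, whence $\Gamma=\langle N,\varphi\rangle$ is abelian. An abelian group acting faithfully and irreducibly on the $\ell$-dimensional $V$ is a cyclic subgroup of $\F_{p^\ell}^{*}$ with $\F_p[\Gamma]\cong\F_{p^\ell}$, and once more $\Gamma\not\subseteq\F_p^{*}$. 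Thus in all cases $M\subseteq F$.

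Finally, since the tame field $M$ attached to each occurring $\tilde L$ lies in $F$, the Kummer-theoretic construction of \cite{DDM} already takes place inside $F$ and assigns to $\tilde L$ an irreducible $\ell$-dimensional $H$-submodule of $F^{*}/{F^{*}}^{p}$; conversely, any such submodule produces a tame group acting faithfully and irreducibly in dimension $\ell$ over $K$, which by the above is of one of the two admissible types, so the construction is consistent and yields an occurring $\tilde L$ (and, by the $H^{1}$-vanishing, a single $K$-isomorphism class $L$). Combined with the first paragraph this gives the asserted bijection. The heart of the matter is the case $d=1$: one has to show that the rigid relation $\varphi\tau\varphi^{-1}=\tau^{q_K}$ defining a tame Galois group is incompatible with acting irreducibly but imprimitively on an $\ell$-dimensional $\F_p$-space --- the argument above does this by forcing such a $\Gamma$ to be abelian --- while everything else is imported from \cite{DDM} or is a routine use of Clifford theory and the structure of tamely ramified extensions of local fields.
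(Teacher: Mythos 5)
Your proposal is correct, but it reaches the theorem by a genuinely different route from the paper. The paper does not use the correspondence of \cite{DDM} as a black box: it re-runs the entire construction (the ramification filtration of the normal closure to locate the elementary abelian subgroup $G_t$, maximality of the subgroup fixing $L$, Schur--Zassenhaus and Kummer theory over $F$ for the converse), and for the one genuinely new point --- that the tame part $L_1=\tilde L^{G_t}$ lies in $F$ --- it embeds $L_1$ in a split tame extension $L_2$ and invokes Lemma \ref{repr}, i.e.\ the dimension formula $\ell=\mathrm{lcm}\bigl(rw/(r,f_K),r\bigr)$, followed by a case analysis on $r\in\{1,\ell\}$ and on $\ell\mid f_K$ versus $\ell\nmid f_K$. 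You instead import the \cite{DDM} bijection with normal closures (supplemented by conjugacy of complements via $H^1(\Gamma,V)=0$, where the paper argues instead that the conjugates of $L$ yield the same module) and isolate the new content as a purely group-theoretic classification of the tame group $\Gamma$: Clifford theory with respect to the inertia image $A=\langle\tau\rangle$ together with the relation $\varphi\tau\varphi^{-1}=\tau^{q_K}$ puts $\Gamma$, in the case $d=\ell$, inside the normalizer of a non-split Cartan, hence inside $\F_{p^\ell}^*\rtimes_\theta\mathrm{Gal}(\F_{p^\ell}/\F_p)$, and in the case $d=1$ forces $|\tau|\mid p-1$, so $\tau^{q_K}=\tau$ and $\Gamma$ is abelian, hence a cyclic subgroup of $\F_{p^\ell}^*$ not in $\F_p^*$. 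This avoids both the split-extension detour and the lcm formula and is arguably cleaner for this step; what the paper's route buys in exchange is that its $(\alpha,\beta,r,w)$ bookkeeping is exactly what is reused for the counting in Section~\ref{counting}, while your route leans on \cite{DDM} for the structure of the normal closure (tameness of $M/K$, faithfulness and irreducibility of the action) and leaves the descent of the Kummer dictionary from the larger field to $F$, and the mutual-inverse check, as assertions --- these are routine and comparable to the paper's own ``it is easily seen''. Two small points worth tightening: in your imprimitive subcase ($d=1$ with $\ell$ distinct characters) the conclusion that $\Gamma$ is abelian is in fact incompatible with the case hypothesis (an abelian group fixes each $A$-isotypic line, contradicting irreducibility), so that subcase is vacuous --- your implications remain valid, but you should say so; and the embedding $\Gamma\hookrightarrow\F_{p^\ell}^*\rtimes_\theta\mathrm{Gal}(\F_{p^\ell}/\F_p)$ requires the standard identification of the full normalizer of $E^*$ in $\mathrm{GL}(V)$ as that split extension (realized by a semilinear Frobenius), not merely the exact sequence you display.
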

\begin{proof}
We first show that to an extension $L/K$ of degree $p^\ell$ having no intermediate extensions one can associate an irreducible $H$-submodule of dimension $\ell$ of $F^*/{F^*}^p$. In order to obtain this, we will prove that $LF/F$ is an elementary abelian extension of degree $p^\ell$. It is easy to see that $[LF:F]=p^\ell$ since $L$ and $F$ are linearly disjoint over $K$. Observe that the extension $L/K$ is totally ramified, since otherwise it would have a proper subextension given by the maximal unramified subextension. Let $\tilde L$ be the normal closure of $L/K$ and $G=\mathrm{Gal}(\tilde L /K)$. As usual, $G$ has the lower numbering ramification filtration $G=G_{-1}\supseteq G_0\supseteq G_1\supseteq \cdots\supseteq \{1\}$, where for every $i$ the subgroup $G_i$ is normal in $G$ and for every $i\geq 1$ the quotient $G_i/G_{i+1}$ is an elementary abelian $p$-group.\\
Let $\tilde H \subseteq G$ be the subgroup fixing $L$. Since $\tilde L$ is the normal closure of $L/K$, no subgroup of $\tilde H$ is normal in $G$; it follows that the intersection of all its conjugates (which is normal if not trivial) is trivial. Moreover since $L/K$ has no intermediate extensions, $\tilde H$ is a maximal subgroup of $G$ and hence there is a unique $t$ such that $G_{t+1}\subseteq \tilde H$ and $G_t\tilde H=G$; observe that we must have $t\geq 1$ since $L/K$ is a totally and wildly ramified extension.\\
Now, $G_t$ is clearly a $\tilde H$-module (the action given by conjugation) and, since the centralizer $C_{\tilde H}(G_t)$ of $G_t$ in $\tilde H$ is trivial (being contained in the intersection of all conjugates of $\tilde H$), it is a faithful $\tilde H$-module. Moreover, $G_{t+1}=\{1\}$ since $\tilde H$ has no subgroup normal in $G$, therefore $G_t$ is an elementary abelian $p$-group; while from $C_{\tilde H}(G_t)=\{1\}$ we have $G_t\cap \tilde H=\{1\}$. It follows that $G\simeq G_t\rtimes\tilde H$ and $|G_t|=p^\ell$. This implies that $G_t$ is also irreducible as $\tilde H$-module since otherwise there would exists a proper $\tilde H$-submodule $A$ of $G_t$ and then a proper subgroup $A\rtimes\tilde H$ of $G$ containing $\tilde H$, which is a contradiction to the maximality of $\tilde H$.\\
Let $L_1$ be the subfield of $\tilde L$ fixed by $G_t$, so $\mathrm{Gal}(L_1/K)\simeq \tilde H$. We will show that the order of $\tilde H$ is prime to $p$. Since $G_t$ is a faithful $\tilde H$-module, $\tilde H$ can be embedded in $\mathrm{Aut}(G_t)\simeq \mathrm{GL}(\ell,\mathbb{F}_p)$. Let $\tilde H_1$ be the ramification group of $\tilde H$, then either $\tilde H_1=\{1\}$ or $\tilde H_1$ is the unique $p$-Sylow of $\tilde H_0$, and it is normal in $\tilde H$. But if $\tilde H$ has a non trivial normal $p$-subgroup, then $G_t$ would have a proper $\tilde H$-submodule, contradicting its irreducibility. So necessarily $p\nmid\ord(\tilde H)$.\\
Now we prove that $L_1$ is contained in $F$.\\
Since $p\nmid\ord(\tilde H)$, $\tilde H=\mathrm{Gal}(L_1/K)$ is the Galois group of a tame extension, therefore it is of the form
$$\langle v,\tau\mid v\tau v^{-1}=\tau^q, \tau^e=1, v^f=\tau^r\rangle$$ 
where $e=e_{L_1/K}$, $f=f_{L_1/K}$, $q=p^{f_K}$ and $r$ is the smallest positive integer such that $v^f=\tau^r$. Such an extension is not necessarily split, but it is always contained in a split one, say $L_2/K$, i.e. in a tame extension whose Galois group is a semidirect product of the inertia subgroup and its complement. $L_2/L_1$ is an unramified extension of suitable degree. Let $\tilde {\tilde H}=\mathrm{Gal}(L_2/K)$ and $\tilde{\tilde L}=\tilde L L_2$.\
Since $\tilde{\tilde H}$ is the Galois group of a split tame extension, we have
$$\tilde{\tilde H}=\langle\tilde\tau\rangle\rtimes\langle \tilde v\rangle$$
with $\ord(\tilde\tau)=e_{L_2/K}=e_{L_1/K}$, $\ord(\tilde v)=f_{L_2/K}$ and $\tilde v\tilde\tau{\tilde v}^{-1}={\tilde\tau}^q$.
\begin{diagram}[size=2em,nohug,labelstyle=\scriptstyle ]
  &                       &     &            &             &    \tilde{\tilde L}   &     & \\
  &                       &     &  \tilde L  &          \ldDash(2,1)   &      &   \rdDash^{(\Z/p\Z)^\ell}  &  \\
  &            &  \ldLine(3,2)  &            &  \rdLine^{(\Z/p\Z)^\ell}_{\mathrm{tot.ram}} &                     &       & L_2  \\
L &                       &     &            &                          &       L_1          &                        \ldDash(2,1)_{\mathrm{unram.}}  &   \\
  & \rdLine(2,2)_{p^\ell} &     &            &   \ldLine(3,2)_{\tilde H} &                   &                          &   \\
  &                       &  K  &            &                           &                   &                          &\\
\end{diagram}
It easy to see that $\mathrm{Gal}(\tilde{\tilde L}/L_2)\simeq G_t\simeq (\Z/p\Z)^\ell$, $\mathrm{Gal}(\tilde{\tilde L}/L)\simeq \tilde{\tilde H}$ and thus $\mathrm{Gal}(\tilde{\tilde L}/K)\simeq G_t\rtimes_\rho\tilde{\tilde H}$. This means that there exists a representation $\rho $ of $\tilde{\tilde H}$ of dimension $\ell$ over $\F_p$ which is irreducible, since otherwise $G_t$ would have a proper $\tilde H$-submodule, but not necessarily faithful. Nevertheless, since the action of $\tilde H$ on $G_t$ is faithful and being $L_2/L_1$ unramified, $\rho$ is still faithful in $\langle\tilde\tau\rangle$.
\begin{lemma}\label{repr}
Every irreducible representation $V$ over $\F_p$ of $\langle\nu \rangle\rtimes_\mu \langle\eta \rangle$, where $\mu(\eta )$ is the elevation to $p^f$ and which is faithful on $\langle\nu \rangle$, is the sum of the conjugates of an irreducible representation over $\overline\F_p$, which is induced from a $1$-dimensional representation of $\langle\nu \rangle\rtimes\langle\eta_c\rangle$, where $\langle\eta_c\rangle=C_{\langle\eta\rangle}(\langle\nu\rangle)$ and $\nu $ and $\eta _c$ act as multiplication by $\alpha$ and $\beta$ respectively.\\
Moreover, the following equation holds 
\begin{equation}\label{dim1}
\mathrm{dim}_{\F_p}V=\mathrm{lcm}\left(\frac{rw}{(r,f)},r\right)
\end{equation}
where $r=[\F_p(\alpha):\F_p]$ and $w=[\F_p(\beta):\F_p]$.
\end{lemma}
\begin{proof}
See for example \cite{DDM}.
\end{proof}
By Lemma \ref{repr}, there exist $\alpha,\beta\in\overline\F_p^*$ such that $\rho$ is the sum of the conjugates of the representation obtained by induction from the $1$-dimensional representation on which $\tilde\tau$ and $\tilde v_c$ (with $\langle \tilde v_c\rangle=C_{\langle\tilde v\rangle}(\langle\tilde \tau\rangle)$ act as multiplication by $\alpha$ and $\beta$ respectively. Moreover, by equation \eqref{dim1}, $\alpha$ and $\beta$ are such that
$$\ell=\mathrm{lcm}\left (\frac{rw}{(r,f_K)},r\right).$$
Therefore we must have $r=1$ or $r=\ell$ (and also $w=1$ or $w=\ell$). This means that our representations over $\F_p$ are either the sum of the $\ell$ conjugates of a $1$-dimensional representation defined over $\F_{p^\ell}$, or it is the only conjugate of an induced representation from a $1$-dimensional one.\\
Since $\tilde H$ is a quotient of $\tilde{\tilde H}/\mathrm{ker}\rho$, we are interested in the image of $\tilde{\tilde H}$ under $\rho$.\\
If $r=1$ then $\alpha\in\F_p^*$, i.e. $e=\ord(\tilde\tau)=\ord(\alpha)\mid p-1$, therefore ${\tilde\tau}^q=\tilde\tau$ so that $\tilde{\tilde H}$ is abelian. It follows that the image of $\tilde{\tilde H}$ under $\rho$ is cyclic, isomorphic to the subgroup of $\F_{p^\ell}^*$ generated by $\alpha$ and $\beta$ (it is not contained in $\F_p$ since $w$ must be equal to $\ell$). Being one of its quotients, this is true also for $\tilde H$. Therefore in this case $L_1\subseteq F$.\\
If $r=\ell$, then we have to distinguish two cases: $\ell\mid f_K$ and $\ell\nmid f_K$. In the first case, we have again ${\tilde\tau}^q=\tilde\tau$ and hence $\tilde{\tilde H}$ abelian; since $r=\ell$, by the same argument of the previous case, $\tilde H$ is isomorphic to a subgroup of $\F_{p^\ell}^*$ not contained in $\F_p^*$. If $\ell\nmid f_K$ then $\tilde v$ acts as elevation to $p$ and $\tilde {\tilde H}$ is not abelian. But $\langle\tilde\tau\rangle\rtimes\langle{\tilde v}^\ell\rangle$ is abelian and its image under $\rho$ is cyclic, isomorphic to the subgroup $C$ of $\F_{p^\ell}^*$ generated by $\alpha$ and $\beta$ (so not contained in $\F_p$). The group $\tilde{\tilde H}$ has a subgroup isomorphic to the quotient $\frac{\tilde{\tilde H}}{\langle\tilde\tau\rangle\rtimes\langle{\tilde v}^\ell\rangle}\simeq \frac{\langle\tilde v\rangle}{\langle {\tilde v}^\ell\rangle}\simeq \Z/\ell\Z$, which acts on $\langle\tilde\tau\rangle\rtimes\langle{\tilde v}^\ell\rangle$ as elevation to $p$ so that $$\tilde{\tilde H}\simeq (\langle\tilde\tau\rangle\rtimes\langle{\tilde v}^\ell\rangle)\rtimes \frac{\tilde{\tilde H}}{\langle\tilde\tau\rangle\rtimes\langle{\tilde v}^\ell\rangle}\simeq C\rtimes\mathrm{Gal}(\F_{p^\ell}/\F_p)$$
with $C<\F_{p^\ell}^*$.
Recalling that $\tilde H$ is a quotient of $\tilde{\tilde H}/\mathrm{ker}\rho$, we find that $L_1$ is contained in $F$.\\
It follows that $\tilde L=LL_1\subseteq LF$. $\tilde L/L_1$ is totally and wildly ramified since $L/K$ is, therefore $\tilde L\cap F=L_1$ and $\mathrm{Gal}(LF/F)\simeq G_t$. This means that $LF/F$ is an elementary abelian extension of degree $p^\ell$. Thus, by Kummer theory, $LF=F(\sqrt[p]{\Xi })$ for some subgroup $\Xi $ of $F^*/{F^*}^p$ of dimension $\ell$ as $\F_p$-vector space. Moreover, $LF/K$ is Galois because $LF=\tilde{L}F$ and $\tilde{L}/K$ and $F/K$ are Galois, so $\Xi $ is a $H$-module (the action of $H$ being that induced by the action on $F^*$) and it is irreducible because otherwise $G_t$ would have a proper $\tilde H$-submodule.\\
Note that the conjugates of $L$ over $K$ are exactly the extensions which lead to the $H$-module $\Xi$ with this construction. Thus we can define a map $\Psi $ from the set of the isomorphism classes of $p^\ell$-extensions of $K$ having no intermediate fields to the set of the irreducible $H$-submodules of $F^*/{F^*}^p$ of dimension $\ell$ over $\F_p$.\\
\newline
Conversely, we show that each irreducible $H$-submodule of dimension $\ell$ of $F^*/{F^*}^p$ corresponds to the isomorphism class of an extension of degree $p^\ell$ of $K$ having no subextensions. Note that $p\nmid \ord(H)$ since $F$ is the composite of normal extensions of degree prime to $p$.\\
Let $\Xi $ be an irreducible $H$-submodule of $F^*/{F^*}^p$ which has dimension $\ell$ as vector space over $\F_p$. Put $M=F(\sqrt[p]{\Xi })$, then $M/K$ is a Galois extension and $S=\mathrm{Gal}(M/F)$ is a $H$-module which is irreducible because $\Xi $ is. Let $\mathfrak{G}$ be the Galois group of $M/K$. Since $S$ is a normal subgroup of $\mathfrak{G}$ of degree $p^\ell$ and $\mathfrak{G}/S\simeq H$ has order prime to $p$, by Schur-Zassenhaus theorem we have $\mathfrak{G}\simeq S\rtimes H$. Let $L$ be the fixed field of $H$; the fields fixed by the conjugates of $H$ in $\mathfrak{G}$ form the isomorphism class of the extension $L/K$. Each of these extensions has degree $p^\ell$ and has no intermediate extensions. In fact, let $T$ be such that $K\subseteq T\subseteq L$; $T$ is the field fixed by a subgroup $C$ of $\mathfrak{G}$, and since $L\supseteq T$ we have $C\supseteq H$. Therefore $C\simeq S_0\rtimes H$ with $S_0<S$, but $S$ is irreducible as $H$-module so $S_0=\{1\}$ or $S_0=S$ i.e. $T=L$ or $T=K$.\\
It follows that we can define a map $\Phi $ from the set of the irreducible $H$-submodules of dimension $\ell$ of $F^*/{F^*}^p$ to the set of the isomorphism classes of $p^\ell$-extensions of $K$ having no intermediate fields.\\
Finally it is easily seen that $\Psi$ and $\Phi$ are inverse to each other.
\end{proof}

\section{Counting the isomorphism classes}\label{counting}
Theorem \ref{mainimpr} says that to count the number of isomorphism classes of extensions of $K$ of degree $p^\ell$ it suffices to count the number of irreducible representations of $H$ of dimension $\ell$ in the $\F_p$-vector space $F^*/{F^*}^p$. To do this we need some information about $H$.\\
Note that $H$ is the Galois group of a finite tame extension therefore, as observed in the proof of the Theorem \ref{mainimpr}, it has the following form 
$$\langle v,\tau\mid v\tau v^{-1}=\tau^q, \tau^e=1, v^f=\tau^r\rangle$$
where $e:=e_{F/K}$, $f:=f_{F/K}$, $q:=p^{f_K}$ and $r$ is the smallest positive integer such that $v^f=\tau^r$, and the extension $F/K$ is always contained in a split one, i.e. in a tame extension whose Galois group is a semidirect product of the inertia subgroup and its complement. Since this tame split extension is of degree prime to $p$ and unramified over $F$, the Theorem \ref{mainimpr} still holds if we put it in place of $F$, as one can easily see by the proof of the same Theorem. With abuse of notation we continue to denote by $F$ this split tame extension of $K$ and by $H$ its Galois group over $K$.\\
So we can suppose that $H=H_0\rtimes U$ where $H_0=\langle \tau\rangle$ and $U=\langle v\rangle$ are cyclic of order $e$ and $f$ respectively, $(e,p)=1$, $e\mid q^f-1$ and $v$ acts on $H_0$ via the map $x\mapsto x^q$. This group satisfies the hypothesis of Lemma \ref{repr}, which describes its representations over $\F_p$.\\
For convenience, we now describe in more details the irreducible representations of $H$ over $\F_p$.\\
The irreducible representations of such a group over $\overline \F_p$ are easy to describe; using this description we determine that contained in the module obtained from $F^*/{F^*}^p$ by extension of scalar, and from these we recover the irreducible representation of $H$ over $\F_p$ contained in $F^*/{F^*}^p$.\\
First of all, we can observe that to study the irreducible representation $\rho$ of $H_0\rtimes U$ one can study the irreducible representation $\bar \rho$ of $\bar H_0\rtimes U$, where $\bar H_0=H_0/\mathrm{ker}(\rho_{| H_0})$.\\
As one can easily see for example in \cite{DDM}, all irreducible representations $\rho $ of $H_0\rtimes U$ in a $\overline\F_p$-vector space are induced from $1$-dimensional representations of the abelian group $H_0\rtimes \tilde U$, where $\tilde U=\langle \tilde v\rangle$ is the centralizer of $\bar H_0$ in $U$. In particular, if $\rho\colon H_0\rtimes U\longrightarrow \mathrm{GL}(W)$ is an irreducible representation, then $\rho =\mathrm{Ind}_{H_0\times \tilde U}^{H_0\rtimes U}(W_\chi )$ where $W_\chi$ is a $1$-dimensional $\overline\F_p$-subspace of $W$ on which $H_0\times \tilde U$ acts by the character $\chi$. Moreover, if $\alpha=\chi(\tau)$ and $\beta=\chi(\tilde v)$ then there is a basis of $W$ with respect to $\tau$ and $v$ act on $W$ via the matrices
\begin{equation*}
T_\alpha =
\begin{pmatrix}
\alpha\\
& \alpha ^q\\
& & \alpha^{q^2}\\
& & & \ddots\\
& & & & \alpha^{q^{s-1}}
\end{pmatrix}
, \quad
V_\beta =
\begin{pmatrix}
& & & & \beta\\
1\\
& 1\\
& & \ddots\\
& & & 1
\end{pmatrix}
\end{equation*}
where $s$ is the index of $\tilde U$ in $U$.\\
It easy to see that conjugate characters lead to the same representation on $W$.\\
Now, the second step is to pass from the irreducible representations over $\overline\F_p$ to the irreducible representations over $\F_p$. These last are sums of the conjugates of irreducible representations over $\overline\F_p$. If $\varphi $ is an irreducible representation of $H_0 \rtimes U$ over an $\F_p$-vector space $V$ then it is the sum of the conjugates of an irreducible representation over $\overline\F_p$ which, as written above, is induced from a $1$-dimensional one, and one has
\begin{equation}\label{dim}
\mathrm{dim}_{\F_p} V=\mathrm{lcm}\left(\frac{rw}{(r,f_K)},r\right)
\end{equation}
where $r=[\F_p(\alpha):\F_p]$ and $w=[\F_p(\beta):\F_p]$.\\
Finally it remains to identify the irreducible representations of $H_0\rtimes U$ over the $\F_p$-vector space $F^*/{F^*}^p$. For what we have said above, we first identify those over the $\overline \F_p$-vector space $F^*/{F^*}^p\otimes_{\F_p}\overline\F_p$ obtained from $F^*/{F^*}^p$ by extension of scalar and then recover from these the irreducible representations over $F^*/{F^*}^p$.

\subsection{The structure of $F^*/{F^*}^p$ as $\F_p[H]$-module}\label{reprirr}~\\

This section is due to the work of Del Corso, Dvornicich and Monge in \cite{DDM}, so for more details one can see their paper.\\
We describe the irreducible representations of $H$ of any dimension $k\geq 2$ contained in $F^*/{F^*}^p$, while in the next section we specialize to $k=\ell$ prime.\\
Recall that we are interested in representations of $H=H_0\rtimes U=\langle\tau\rangle\rtimes\langle v\rangle$ in a subspace of dimension $k$ of $F^*/{F^*}^p$, that is to say in the $\mathbb{F}_p [H]$-submodules of $F^*/{F^*}^p$ of dimension $k$.\\
If $\pi$ is a uniformizer of $F$ and $U_1$ denotes the group of principal units of $F$, it is well-known that $$F^*\simeq \langle \pi\rangle \times \kappa_F^*\times U_1$$ as $H$- modules, and that $U_1$ has a filtration $\{U_i\}_{i\geq 1}$, where $U_i =\{u\in F^* | u\equiv 1(\mathrm{mod}\: \pi ^i)\}$. It follows that $$F^*/{F^*}^p\simeq \langle \pi\rangle /{\langle \pi\rangle}^p\times U_1 / {U_1^p}$$ as $\mathbb{F}_p [H]$-modules. The filtration $\{U_i\}_{i\geq 1}$ of $U_1$ induces the filtration $\{U_i {U_1^p} /{U_1^p}\}_{i\geq 1}$ of $U_1 / {U_1^p}$ and, as $U_{i+1}$ is complemented in $U_i$ as $H$-module, also $U_{i+1} {U_1^p} /{U_1^p}$ is complemented in $U_i {U_1^p} /{U_1^p}$ as $\mathbb{F}_p [H]$-module. So one has $$F^*/{F^*}^p\simeq \langle \pi\rangle /{\langle \pi\rangle}^p\oplus \bigoplus_{i=1}^\infty U_i {U_1^p} / U_{i+1} {U_1^p} .$$
The nonzero terms in the right-hand side are those with $i=p e_F /(p-1)$ and, $0<i< p e_F /(p-1)$ with $(i,p)=1$ (see for example \cite{FV}). Then the above relation can be written as $$F^*/{F^*}^p\simeq \langle \pi\rangle /{\langle \pi\rangle}^p\oplus \bigoplus_{i\in [[0,I_F]]} U_i U_1^p / U_{i+1} U_1^p \oplus U_{I_F}U_1^p /U_1^p.$$ where $I_F=p e_F /(p-1)$ and $[[0,I_F]]$ is the set of integers prime with $p$ in the interval $]0,I_F[$.\\
We want to describe the representations of $H$ contained in $F^*/{F^*}^p$. To do this, observe that the action of $H$ on $\langle \pi\rangle /{\langle \pi\rangle}^p\simeq \mathbb{F}_p$ is clearly trivial and that $U_{I_F}U_1^p /U_1^p$ corresponds via Kummer theory to the Galois unramified extension of degree $p$, so the action of $H$ on this submodule of dimension $1$ of $F^*/{F^*}^p$ is given by the cyclotomic character $\omega $. Since we are interesting in the irreducible sub-representations of dimension $k\geq 2$, we can reduce to consider those contained in $\bigoplus_{i\in [[0,I_F]]} U_i U_1^p / U_{i+1} U_1^p$.\\
We need to study the structure of $U_i U_1^p / U_{i+1} U_1^p$ as $\mathbb{F}_p [H]$-module. Since $F/K$ is a tamely ramified extension, we can choose as a uniformizer $\pi$ of $F$ an $e$-th root of a uniformizer of $K$, where clearly $e=e_{F/K}$. Then for $i\geq 1$, each element of $U_i /U_{i+1}$ can be written as $1+\epsilon \pi^i$ with $\epsilon \in U_0$ ($U_0=\mathcal{O}_F^*$ is the multiplicative subgroup of the ring of the integers of $F$). The action of $H$ on it is given by $$\tau(1+\epsilon \pi^i)=1+\zeta ^i\epsilon \pi^i +\dots,\qquad v(1+\epsilon \pi^i)=1+\epsilon ^q\pi^i +\dots,$$ where $\zeta =\tau(\pi)/\pi $ is a primitive $e$-th root of $1$. As usual one can identify $U_i/U_{i+1}$ with $\kappa_F$ via the map $$1+\epsilon \pi^i\mapsto \bar{\epsilon }$$ and this induces on $\kappa_F$ the following action of $H$ $$\tau(\bar{\epsilon})=\bar{\zeta}^i\bar{\epsilon},\qquad v(\bar{\epsilon})=\bar{\epsilon}^q.$$
Denote by $M_i$ the $\mathbb{F}_p [H]$-module formed by the $\mathbb{F}_p$-module $\kappa_F$ with the above action of $H$. It is clear that $$U_i U_1^p/{U_{i+1}U_1^p}\simeq M_i$$ as $\mathbb{F}_p [H]$-modules. So we search the irreducible $\mathbb{F}_p [H]$-submodules of $\bigoplus_{i\in [[0,I_F]]} M_i$ or, equivalently, the irreducible representations of $H$ over $\mathbb{F}_p$ contained in $\bigoplus_{i\in [[0,I]]} M_i$ (viewed as $\mathbb{F}_p$-vector space). To do this we first extend the $\mathbb{F}_p$-representation $M_i$ of $H$ to an $\overline{\mathbb{F}}_p$-representation, identify its $\overline{\mathbb{F}}_p$ sub-representations and from each of these find the $\mathbb{F}_p$ sub-representations via the sum of its conjugates.\\
Let $\overline{M}_i=M_i\otimes _{\mathbb{F}_p} \overline{\mathbb{F}}_p$ be the extension of $M_i$ to the algebraic closure of $\mathbb{F}_p$. 
It can be shown that
\begin{equation}\label{barMi}
\overline{M}_i\simeq \bigoplus_{\substack{\beta\in\overline{\mathbb{F}}_p\\ \beta^{f/s}=1}}{J_{(\alpha,\beta)}}^{f_K}
\end{equation}
where $f=f_{F/K}$, $s=[U:\tilde U]$ and $J_{(\alpha,\beta)}=\mathrm{Ind}_{H_0\times \tilde{U}}^H(V_{(\alpha,\beta)})$ is the irreducible $s$-dimensional representation over $\overline\F_p$ induced from the $1$-dimensional representation on which $\tau$ and $\tilde v$ act via multiplication by $\alpha$ and $\beta$ respectively.\\
Let $Y=\bigoplus_{i\in [[0,I_F]]} \overline{M}_i$, then $J_{(\alpha,\beta)}$ appears $n_K=[K:\Q_p]$ times in $Y$. In fact, the $i$'s such that $\zeta^i=\alpha$ have equal remainder modulo $e$; being $(e,p)=1$ those in $]0,I_F[$ and prime with $p$ are exactly $e_K$. Multiplying by the exponent of $J_{(\alpha,\beta)}$ that appears in \eqref{barMi} we have the claim.\\
Moreover, as observed in the previous section, the $s$ conjugated pairs $(\alpha^{q^i},\beta)$ yield the same representation, therefore the multiplicity of $J_{(\alpha,\beta)}$ in $Y$ is $s n_K$.\\
It is easy to see that the representation $J_{(\alpha,\beta)}$ has $$d=\mathrm{lcm}(w,(r,f_K))$$ conjugates over $\mathbb{F}_p$, where $r=[\F_p(\alpha):\F_p]$ and $w=[\F_p(\beta):\F_p]$ as above, and so it is defined over $D=\mathbb{F}_{p^d}$. Moreover, a short reflection leads to observe that $s=[U:\tilde U]$ is equal to $\frac{r}{(r,f_K)}$ since $U$ acts on $H_0$ as elevation to $q$ and therefore $s$ is the smallest power of $q$ such that $e\mid q^s-1$.\\
Let $X$ be an irreducible sub-representation of $Y$ defined over $\mathbb{F}_p$ and containing a unique copy of $J_{(\alpha,\beta)}$. From each copy of $J_{(\alpha,\beta)}$ contained in $Y$ and defined over $D$ we obtain a representation isomorphic to $X$. Consequently, to count the sub-representations isomorphic to $X$ and defined over $\mathbb{F}_p$ is the same as to count the sub-representations that are isomorphic to $J_{(\alpha,\beta)}$ and defined over $D$. This can be made counting the sub-representations contained in $(J_{(\alpha,\beta)})^{s n_K}$ and working over $D$. Let's consider the immersions $$J_{(\alpha,\beta)}\longrightarrow (J_{(\alpha,\beta)})^{s n_K}$$ which are defined over $D$. Using Schur lemma, we find that the number of immersions is $|D|^{s n_K} -1$ and this number must be divided by $|D|-1$ when taking into account that two immersions have the same image if and only if they differ by multiplication by a constant. It follows that the number of representations defined over $\mathbb{F}_p$ and containing a representation isomorphic to $J_{(\alpha,\beta)}$ is $\frac{p^{ds n_K}-1}{p^d -1}$.\\

\subsection{The total number of isomorphism classes of extensions of degree $p^\ell$ of $K$}~\\

Let $V$ be an irreducible $\F_p[H]$-submodule of $F^*/{F^*}^p$ of dimension $\ell$. From what we have said above, viewing $V$ as irreducible representation of $H$ over $\F_p$, it is isomorphic to the sum of the conjugates over $\F_p$ of an irreducible representation of $H$ over $\overline\F_p$, which we have denoted by $J_{(\alpha,\beta)}$. If $r=[\F_p(\alpha):\F_p]$ and $w=[\F_p(\beta):\F_p]$, by equation \eqref{dim} we have
\begin{equation}\label{dim_ell}
\mathrm{dim}_{\mathbb{F}_p} V=\mathrm{lcm}\left(\frac{rw}{(r,f_K)},r\right)=\ell,
\end{equation}
therefore $r=1$ or $r=\ell$. It follows that $s=\frac{r}{(r,f_K)}=1$ or $s=\ell$. We must distinguish two cases: $\ell\mid f_K$ and $\ell\nmid f_K$.\\

If $\ell\mid f_K$ then $r$ and $w$ must be equal to $1$ or $\ell$ and at least one of them must be equal to $\ell$. It follows that we have exactly $(p^\ell-1)^2-(p-1)^2$ possible pairs $(\alpha,\beta)$; moreover, since $s=\frac{r}{(r,f_K)}=1$, for each of them there are $\frac{p^{\ell n_K}-1}{p^\ell-1}$ representations over $\F_p$ containing a representation isomorphic to $J_{(\alpha,\beta)}$. Finally, we have to take in account that the pairs $(\alpha,\beta),(\alpha^p,\beta^p),\dots,(\alpha^{p^{\ell-1}},\beta^{p^{\ell-1}})$ lead to the same representation over $\F_p$.\\
Collecting all the informations, we find that if $\ell\mid f_k$ there are exactly
$$\frac{1}{\ell}\frac{p^{\ell n_K}-1}{p^\ell-1}((p^\ell-1)^2-(p-1)^2)$$
isomorphism classes of extensions of degree $p^\ell$ of $K$ having no intermediate fields.\\

If $\ell\nmid f_K$ then equation \eqref{dim_ell} says that one of $r$ and $w$ must be $1$ and the other $\ell$.\\
For $r=1$ and $w=\ell$, we have again $s=\mathrm{dim}J_{(\alpha,\beta)}=1$ and $d=\ell$ therefore, for each of the $(p-1)(p^\ell-1)-(p-1)^2$ possible choices of $(\alpha,\beta)$, there are $\frac{p^{\ell n_K}-1}{p^\ell-1}$ $\F_p$-representations containing a representation isomorphic to $J_{(\alpha,\beta)}$, and for the same reason as above this number must be divided by $\ell$. So for $\alpha\in\F_p^*$ and $\beta\in\F_{p^\ell}^*$ we have $\frac{1}{\ell}\frac{p^{\ell n_K}-1}{p^\ell-1}(p-1)(p^\ell-p)$ irreducible representations of $H$ of dimension $\ell$ over the $\F_p$-vector space $F^*/{F^*}^p$.\\
To these we have to add that obtained from $r=\ell$ and $w=1$. In this case $s=\ell$ and $d=1$ i.e. $J_{(\alpha,\beta)}$ is already defined over $\F_p$, therefore for each of the $(p^\ell-1)(p-1)-(p-1)^2$ possible pairs $(\alpha,\beta)$ there are $\frac{p^{\ell n_K}-1}{p-1}$ representations isomorphic to $J_{(\alpha,\beta)}$. This value needs to be divided by $\ell$ because from the definition of $J_{(\alpha,\beta)}$ one has $J_{(\alpha,\beta)}=J_{(\alpha^p,\beta)}=\cdots =J_{(\alpha^{p^{\ell-1}},\beta)}$. It follows that for $\alpha\in \F_{p^\ell}^*$ and $\beta\in \F_p^*$ there are $\frac{1}{\ell}(p^{\ell n_K}-1)(p^\ell-p)$ irreducible representations of $H$ of dimension $\ell$ over $F^*/{F^*}^p$.\\
Adding the two contributions one finds
\begin{multline*}
\frac{1}{\ell}(p^{\ell n_K}-1)(p^\ell -p)+\frac{1}{\ell}\frac{p^{\ell n_K}-1}{p^\ell-1}[(p-1)(p^\ell -1)-(p-1)^2]= \\
 \frac{1}{\ell}\frac{p^{\ell n_K}-1}{p^\ell-1}[(p^\ell -1)^2-(p-1)^2]
\end{multline*}
isomorphism classes of extensions of degree $p^\ell$ of $K$ having no intermediate fields.\\
Note that we have obtained the same number of isomorphism classes of extensions as in the case $\ell\mid f_K$.\\

\subsection{The number of isomorphism classes of extensions whose normal closure has a prescribed Galois group}~\\

In the previous section we have count all the isomorphism classes of extensions of degree $p^\ell$ of $K$ having no intermediate fields. To each of them we can associate a group, that is the Galois group of the normal closure of the extensions over $K$. It turns that some isomorphism classes are associated to the same group, i.e. non isomorphic $p^\ell$-extensions of $K$ can have normal closure with the same Galois group.\\
In this section we identify all possible groups that can appear as the Galois group of the normal closure of a $p^\ell$-extension of $K$ having no intermediate fields, and for each of them we count the number of isomorphism classes of extensions which are associated to it.\\

First of all observe that if $L/K$ is a $p^\ell$-extension having no intermediate fields and $\tilde L$ is its normal closure then, by Theorem \ref{mainimpr}, $$\mathrm{Gal}(\tilde L/K)\simeq V\rtimes_{\bar\rho}\bar H$$ where $\bar\rho$ is the map induced on the quotient $\bar H=H/{\mathrm{ker}\rho}$ and the pair $(V,\rho)$ is the representation of dimension $\ell$ of $H$ in $F^*/{F^*}^p$, which correspond to the class of $L/K$ under the correspondence of Theorem \ref{mainimpr}. In other words, $\mathrm{Gal}(\tilde L/K)$ is the semidirect product of $V$ with the biggest quotient of $H$ acting faithfully on it.\\
Fixing a basis of the $\F_p$-vector space $V$, we can identify the image of $\rho$ with a subgroup of $\mathrm{GL}(\ell,\F_p)$, so that $\mathrm{Gal}(\tilde L/K)\simeq (\F_p^+)^\ell\rtimes \Hg_\rho$ where $\Hg_\rho$ represents the action of $\bar H$ on $V$ given by $\bar\rho$, expressed with respect to the fixed basis.\\
Now observe that for what we have said above, our representations are sums of the conjugates of $s$-dimensional representations $J_{(\alpha,\beta)}$, where $s=1$ or $s=\ell$ and $\alpha,\beta\in\F_{p^\ell}^*$: if $s=1$ then $J_{(\alpha,\beta)}$ is defined over $\F_{p^\ell}$ and $\bar H$ is abelian, so $\Hg_\rho=\Hg_{(\alpha,\beta)}$ is cyclic since it is isomorphic to the homomorphic image of a finite group in $\mathrm{GL}(1,\F_{p^\ell})=\F_{p^\ell}^*$; if $s=\ell$ then $J_{(\alpha,\beta)}$ is already defined over $\F_p$ but the group $\Hg_{(\alpha,\beta)}$ of the matrices which describes the action of $\bar H$ is not abelian.\\
The normal closures of two isomorphism classes have the same Galois group if and only if $$(\F_p^+)^\ell\rtimes \Hg_{(\alpha,\beta)}\simeq (\F_p^+)^\ell\rtimes \Hg_{(\alpha',\beta')}$$ and this happens if and only if the two subgroups $\Hg_{(\alpha,\beta)}$, $\Hg_{(\alpha',\beta')}$ of $\mathrm{GL}(\ell,\F_p)$ are conjugated over $\mathrm{GL}(\ell,\F_p)$. In fact, if $\sigma\colon (\F_p^+)^\ell\rtimes \Hg_{(\alpha,\beta)}\longrightarrow (\F_p^+)^\ell\rtimes \Hg_{(\alpha',\beta')}$ is an isomorphism then for every $A\in \Hg_{(\alpha,\beta)}$ the following diagram must be commutative
\begin{equation*}
\xymatrix{
\F_p^\ell \ar[d]_{\sigma|_{\F_p^\ell}} \ar[r]^A & \F_p^\ell \ar[d]^{\sigma|_{\F_p^\ell}}\\
\F_p^\ell \ar[r]_{\sigma(A)} & \F_p^\ell }
\end{equation*}
where $\sigma |_{\F_p^\ell}\in \mathrm{Aut}(\F_p^\ell)$ and thus can be expressed as an invertible matrix in $\mathrm{GL}(\ell,\F_p)$.\\
To classify the various case depending on the value of $s=r/(r,f_K)$, we must distinguish two different situations: $\ell\mid f_K$ and $\ell\nmid f_K$.\\ 
For what will follow it is convenient to introduce the function $\psi (a,b)$ that maps $(a,b)\in\mathbb{N}\times\mathbb{N}$ to the number of elements of order $a$ in the group $C_a\times C_b$. It can be expressed as 
\begin{equation*}
\psi (a,b)=a\cdot (a,b)\cdot\prod_{\substack {l \:\mathrm{prime}\\ l 
\mid (a,b)}}\left(1-\frac{1}{l^2}\right)\cdot\prod_{\substack{ l \:\mathrm{prime}\\ l\mid a\\ l \nmid (a,b)}}\left(1-\frac{1}{l}\right).
\end{equation*}
Moreover, for every $c$ dividing $p^\ell-1$ we define the function $\lambda(c,p)$ as
\begin{equation}\label{lambda}
\lambda(c,p)=
\begin{cases}
1   & \text{if $p\equiv 2,\dots,\ell-1\:(\mathrm{mod}\:\ell)$ or}\\ 
    & \text{$p\equiv 1\:(\mathrm{mod}\:\ell)$ and, $v_\ell(c)=0$ or $v_\ell(c)=v_\ell(p^\ell -1)$}\\
\frac{1}{\ell} & \text{if $p\equiv 1\:(\mathrm{mod}\:\ell)$ and $v_\ell(p-1)<v_\ell(c)<v_\ell(p^\ell -1)$}\\
\frac{1}{\ell+1} & \text{if $p\equiv 1\:(\mathrm{mod}\:\ell)$ and $v_\ell(c)\leq v_\ell(p-1)$}
\end{cases}
\end{equation}
\newline
CASE $\ell\mid f_K$.
\begin{theorem}
Let $K$ be a $p$-adic field, $f_K$ its inertial degree over $\Q_p$ and $n_K$ its absolute degree. Let $\ell$ be a prime number and suppose that $\ell\mid f_K$. Then the Galois group of the normal closure of a $p^\ell$-extension of $K$ having no intermediate fields is of type $\F_{p^\ell}^+\rtimes C$, where $C$ is a subgroup of $\F_{p^\ell}^*$ not contained in $\F_p^*$ with the natural action on $\F_{p^\ell}^+$.\\
Moreover, for every integer $c$ dividing $p^\ell-1$ but not $p-1$, if $C$ is the cyclic subgroup of $\F_{p^\ell}^*$ of order $c$, then there are
$$n(c)=\frac{1}{\ell}\psi(c,p^\ell-1)\frac{p^{\ell n_K}-1}{p^\ell-1}$$
classes of isomorphic $p^\ell$-extensions of $K$ having no intermediate fields whose normal closure has Galois group isomorphic to $\F_{p^\ell}^+\rtimes C$.
\end{theorem}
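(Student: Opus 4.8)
The plan is to read everything off from Theorem~\ref{mainimpr} together with the description of the irreducible $\F_p[H]$-submodules of $F^*/{F^*}^p$ obtained in Section~\ref{counting}, specialised to $\ell\mid f_K$. So let $L/K$ be a $p^\ell$-extension with no intermediate fields, $\tilde L$ its normal closure, and let $V\subseteq F^*/{F^*}^p$ be the associated irreducible $\F_p[H]$-module of dimension $\ell$, so that $\mathrm{Gal}(\tilde L/K)\simeq V\rtimes_{\bar\rho}\bar H$ with $\bar H=H/\ker\rho$. Since $r\in\{1,\ell\}$ and $\ell\mid f_K$ we have $(r,f_K)=r$, hence $s=r/(r,f_K)=1$; therefore $V\otimes_{\F_p}\overline\F_p$ is the sum of the $\ell$ conjugates over $\F_p$ of a one-dimensional representation $J_{(\alpha,\beta)}$ defined over $\F_{p^\ell}$, with $\alpha,\beta\in\F_{p^\ell}^*$. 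Consequently $\mathrm{End}_{\F_p[H]}(V)\simeq\F_{p^\ell}$, so $V$ is a one-dimensional $\F_{p^\ell}$-vector space on which $H$ acts $\F_{p^\ell}$-linearly, i.e.\ through the character $H\to\F_{p^\ell}^*$ sending $\tau\mapsto\alpha$, $v\mapsto\beta$. Thus $\bar H\simeq C:=\langle\alpha,\beta\rangle\subseteq\F_{p^\ell}^*$ acting on $V\simeq\F_{p^\ell}^+$ by multiplication, so $\mathrm{Gal}(\tilde L/K)\simeq\F_{p^\ell}^+\rtimes C$ with the natural action. Irreducibility of $V$ over $\F_p$ rules out $C\subseteq\F_p^*$ (otherwise every $\F_p$-line would be $H$-stable); since $\F_{p^\ell}^*$ is cyclic with unique subgroup $\F_p^*$ of order $p-1$, this is exactly the condition that $c:=|C|$ divides $p^\ell-1$ but not $p-1$. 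This proves the first assertion.

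For the count, fix such a $c$. Because $\F_{p^\ell}^*$ is cyclic there is a unique subgroup $C_c$ of order $c$, and in any group $\F_{p^\ell}^+\rtimes C_c$ the subgroup $\F_{p^\ell}^+$ is the normal, hence unique, Sylow $p$-subgroup, so $c$ is recovered as its index; consequently the isomorphism type of $\mathrm{Gal}(\tilde L/K)$ depends only on $c$, and the classes whose normal closure has Galois group $\simeq\F_{p^\ell}^+\rtimes C_c$ are precisely those whose pair $(\alpha,\beta)$ satisfies $\langle\alpha,\beta\rangle=C_c$, i.e.\ $\mathrm{lcm}(\ord(\alpha),\ord(\beta))=c$. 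By the counting of Section~\ref{counting}, for $\ell\mid f_K$ the isomorphism classes of $p^\ell$-extensions with no intermediate fields are in bijection with the orbits of the action $(\alpha,\beta)\sim(\alpha^p,\beta^p)\sim\cdots$ on the admissible pairs, each orbit contributing $\frac{p^{\ell n_K}-1}{p^\ell-1}$ submodules of $F^*/{F^*}^p$. It remains to count the admissible pairs with $\langle\alpha,\beta\rangle=C_c$: these are the pairs of $C_c\times C_c$ that generate $C_c$, and their number is $\psi(c,p^\ell-1)$, because an element of $C_c\times C_{p^\ell-1}$ has order $c$ exactly when both coordinates lie in $C_c$ and the pair generates $C_c$. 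Since $c\nmid p-1$ no admissible pair is fixed by $(\alpha,\beta)\mapsto(\alpha^p,\beta^p)$, and, $\ell$ being prime, every orbit has size exactly $\ell$; dividing $\psi(c,p^\ell-1)$ by $\ell$ and multiplying by $\frac{p^{\ell n_K}-1}{p^\ell-1}$ gives $n(c)=\frac1\ell\,\psi(c,p^\ell-1)\,\frac{p^{\ell n_K}-1}{p^\ell-1}$.

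The step needing the most care is the translation ``isomorphic normal closures $\Longleftrightarrow$ equal subgroup $\langle\alpha,\beta\rangle$ of $\F_{p^\ell}^*$'': one must check both that the abstract group $\F_{p^\ell}^+\rtimes C$ determines $|C|$ (done above via its Sylow $p$-subgroup) and that the general criterion of Section~\ref{counting} --- conjugacy of the image groups $\Hg_{(\alpha,\beta)}$ in $\mathrm{GL}(\ell,\F_p)$ --- reduces, in the present abelian case, to equality of cyclic subgroups of $\F_{p^\ell}^*$, which is automatic. The only other non-formal point is the combinatorial identity that the number of generating pairs of $C_c$ equals $\psi(c,p^\ell-1)$; it follows from Möbius inversion of $\sum_{d\mid c}\psi(d,p^\ell-1)=c^2$ and matches the closed form of $\psi$ recorded before the statement. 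Everything else is bookkeeping with the results of Section~\ref{counting}.
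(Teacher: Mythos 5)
Your argument is correct and follows essentially the same route as the paper: specialise the representation-theoretic analysis of Section~\ref{counting} to $\ell\mid f_K$, observe that $s=1$ forces the acting group to be a cyclic subgroup $C=\langle\alpha,\beta\rangle$ of $\F_{p^\ell}^*$ acting naturally on $\F_{p^\ell}^+$, and count the pairs of given order via $\psi(c,p^\ell-1)$, the Frobenius orbits of size $\ell$, and the multiplicity $\frac{p^{\ell n_K}-1}{p^\ell-1}$. Your phrasing via $\mathrm{End}_{\F_p[H]}(V)\simeq\F_{p^\ell}$ is just a coordinate-free version of the paper's explicit diagonal matrices, and your explicit checks (free Frobenius action since $c\nmid p-1$, recovery of $c$ from the group order) only make the paper's bookkeeping more careful.
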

\begin{proof}
Since $\ell$ is a prime number, from (\ref{dim}) it follows that in this case for the dimension to be $\ell$ we need $r,w$ equal to $1$ or $\ell$ and at least one to be equal to $\ell$. Moreover for every $\alpha$ and $\beta$ satisfying this condition, we always have $\mathrm{dim}J_{(\alpha,\beta)}=s=\frac{r}{(r,f_K)}=1$. It follows that $\Hg_{(\alpha,\beta)}$ is generated by the diagonal matrices
\begin{equation*}
\begin{pmatrix}
\alpha\\
& \alpha ^p\\
& & \ddots\\
& & & \alpha^{p^{\ell-1}}
\end{pmatrix}
, \quad
\begin{pmatrix}
\beta\\
& \beta ^p\\
& & \ddots\\
& & & \beta^{p^{\ell-1}}
\end{pmatrix}
\end{equation*}
and therefore it is isomorphic to the cyclic subgroup of $\F_{p^\ell}^*$ generated by $\alpha$ and $\beta$. Therefore the only case in which $(\F_p^+)^\ell\rtimes \Hg_{(\alpha,\beta)}\simeq (\F_p^+)^\ell\rtimes \Hg_{(\alpha',\beta')}$ is when $\{\alpha,\beta\}$ and $\{\alpha',\beta'\}$ generate groups of the same order.\\
Note that if $\alpha$ and $\beta$ generate the cyclic subgroup $C$ of $\F_{p^\ell}^*$ then $(\F_p^+)^\ell\rtimes \Hg_{(\alpha,\beta)}\simeq\F_{p^\ell}^+\rtimes  C$, where $C$ acts in the natural way on $\F_{p^\ell}^+$.\\ 
Let $c\mid p^\ell -1$ but $c\nmid p-1$, then the number of pairs $(\alpha,\beta)$ in $\mathbb{F}_{p^\ell}^*\times\mathbb{F}_{p^\ell}^*$ having order $c$ is equal to $\psi (c,p^\ell -1)$. Since $s=1$ for each of them, the number of representations in $F^*/{F^*}^p$ defined over $\mathbb{F}_p$ and containing a representation isomorphic to $J_{(\alpha,\beta)}$ is $(p^{\ell n_K}-1)/(p^\ell -1)$. Observe that we need to count together $(\alpha,\beta)$, $(\alpha^p,\beta^p)$ $\dots$ $(\alpha^{p^{\ell-1}},\beta^{p^{\ell-1}})$ since they lead to the same representation over $\F_p$, so we must divide by $\ell$.\\
It follows that if $C$ is the cyclic group of order $c$ in $\mathbb{F}_{p^\ell}^*$, the number of classes of extensions whose normal closure has Galois group isomorphic to $\mathbb{F}_{p^\ell}^+\rtimes  C$ (where $C$ acts naturally on $\F_{p^\ell}^+$) is exactly $$\frac{1}{\ell}\psi(c,p^\ell -1)\frac{p^{\ell n_K}-1}{p^\ell -1}.$$
The groups of type $\F_{p^\ell}^+\rtimes C$ with $C<\F_{p^\ell}^*$ is the only groups that can appear as Galois group of the normal closure of a $p^\ell$-extension of $K$ having no intermediate fields when $\ell\mid f_K$.
\end{proof}

\noindent CASE $\ell\nmid f_K$. 
\begin{theorem}
Let $K$ be a $p$-adic field, $f_K$ its inertial degree over $\Q_p$ and $n_K$ its absolute degree. Let $\ell$ be a prime number and suppose that $\ell\nmid f_K$. Then the Galois group of the normal closure of a $p^\ell$-extension of $K$ having no intermediate fields is either of type $\F_{p^\ell}^+\rtimes C$, where $C$ is a subgroup of $\F_{p^\ell}^*$ not contained in $\F_p^*$ (with the natural action on $\F_{p^\ell}^+$), or of type $\left(\F_p\right)^\ell\rtimes \Hg$, where $\Hg\subseteq \mathrm{GL}(\ell,\F_p)$ is isomorphic to a non abelian subgroup of $\F_{p^\ell}^*\rtimes\mathrm{Gal}(\F_{p^\ell}/\F_p)$.\\
Moreover, 
\begin{itemize}
\item for every integer $c$ dividing $p^\ell-1$ but not $p-1$, if $C$ is the cyclic subgroup of $\F_{p^\ell}^*$ of order $c$, then there are
$$n(c)=\frac{1}{\ell}\psi(c,p-1)\frac{p^{\ell n_K}-1}{p^\ell-1}$$
classes of isomorphic $p^\ell$-extensions of $K$ having no intermediate fields whose normal closure has Galois group isomorphic to $\F_{p^\ell}^+\rtimes C$;
\item for every non abelian subgroup $\Hg\subseteq \mathrm{GL}(\ell,\F_p)$ isomorphic to a subgroup $Z$ of $\F_{p^\ell}^*\rtimes\mathrm{Gal}(\F_{p^\ell}/\F_p)$, if $C=Z\cap \F_{p^\ell}^*$ has order $c$ then there are
\begin{equation*}
n(\Hg)=
\begin{cases}
\lambda(c,p)\psi(c,p-1)\frac{1}{\ell}\frac{p^{\ell n_K}-1}{p-1} & \text{if $C\rightarrow Z$ splits}\\
\frac{1-\lambda(c,p)}{\ell-1}\psi(c,p-1)\frac{1}{\ell}\frac{p^{\ell n_K}-1}{p-1} & \text{if $C\rightarrow Z$ does not split}
\end{cases}
\end{equation*}
classes of isomorphic $p^\ell$-extensions of $K$ having no intermediate fields whose normal closure has Galois group isomorphic to $\left(\F_p\right)^\ell\rtimes \Hg$.
\end{itemize}
\end{theorem}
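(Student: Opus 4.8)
The plan is to apply, under the hypothesis $\ell\nmid f_K$, the strategy used in Section~\ref{counting} and in the previous theorem. By Theorem~\ref{mainimpr} and the remarks opening the section, an isomorphism class of $p^\ell$-extensions of $K$ with no intermediate fields is an irreducible $\ell$-dimensional $\F_p[H]$-submodule $V$ of $F^*/{F^*}^p$, $V$ is a sum of $\F_p$-conjugates of some $J_{(\alpha,\beta)}$, and the Galois group of the normal closure is $(\F_p^+)^\ell\rtimes\Hg_\rho$. Since $\ell\nmid f_K$, equation~\eqref{dim_ell} forces $(r,w)$ to be either $(1,\ell)$ — so $s=1$, $\alpha\in\F_p^*$, $\beta\in\F_{p^\ell}^*\smallsetminus\F_p^*$, $J_{(\alpha,\beta)}$ defined over $\F_{p^\ell}$ — or $(\ell,1)$ — so $s=\ell$, $d=1$, $\alpha\in\F_{p^\ell}^*\smallsetminus\F_p^*$, $\beta\in\F_p^*$, $J_{(\alpha,\beta)}$ defined over $\F_p$. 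In the first case the matrices describing the action are the two diagonal matrices with entries the Frobenius conjugates of $\alpha$ and of $\beta$, so $\Hg_\rho=\Hg_{(\alpha,\beta)}$ is cyclic, isomorphic to $C:=\langle\alpha,\beta\rangle\le\F_{p^\ell}^*$ (not inside $\F_p^*$, since $\beta\notin\F_p^*$), and $(\F_p^+)^\ell\rtimes\Hg_{(\alpha,\beta)}\simeq\F_{p^\ell}^+\rtimes C$ with the natural action. In the second case $\Hg_{(\alpha,\beta)}=\langle T_\alpha,V_\beta\rangle$; sending $T_\gamma\mapsto\gamma$ and the cyclic shift to a generator $\phi_p$ of $\mathrm{Gal}(\F_{p^\ell}/\F_p)$ identifies the subgroup of $\mathrm{GL}(\ell,\F_p)$ generated by all $T_\gamma$ and by the shift with $\F_{p^\ell}^*\rtimes\mathrm{Gal}(\F_{p^\ell}/\F_p)$, hence $\Hg_{(\alpha,\beta)}$ with the subgroup $Z=\langle\alpha,\,\beta\phi_p\rangle$, which is non abelian since $\alpha\notin\F_p$. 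This gives the two possible shapes of the Galois group.

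For the classes whose group is $\F_{p^\ell}^+\rtimes C$ the argument is that of the theorem for $\ell\mid f_K$, with the single change that now $\alpha$ must lie in $\F_p^*$. Two admissible pairs produce $\mathrm{GL}(\ell,\F_p)$-conjugate groups exactly when $\langle\alpha,\beta\rangle$ and $\langle\alpha',\beta'\rangle$ have the same order, a cyclic group being determined by its order; so for $c\mid p^\ell-1$, $c\nmid p-1$ the relevant pairs $(\alpha,\beta)\in\F_p^*\times\F_{p^\ell}^*$ are those generating the subgroup of order $c$, i.e.\ those with $\alpha\in\F_p^*$, $\ord(\beta)\mid c$ and $\mathrm{lcm}(\ord(\alpha),\ord(\beta))=c$. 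Counting these amounts to counting the elements of order $c$ in $C_c\times C_{p-1}$, i.e.\ $\psi(c,p-1)$; each gives $\tfrac{p^{\ell n_K}-1}{p^\ell-1}$ submodules isomorphic to the associated $J_{(\alpha,\beta)}$ (the count of Section~\ref{reprirr} with $s=1$), and the $\ell$ pairs obtained from $\beta$ by Frobenius give the same submodule; dividing by $\ell$ yields $n(c)=\tfrac1\ell\psi(c,p-1)\tfrac{p^{\ell n_K}-1}{p^\ell-1}$.

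The heart of the statement is the count for the non abelian groups $(\F_p)^\ell\rtimes\Hg$. First I would establish the conjugacy criterion: two pairs of the second type give $\mathrm{GL}(\ell,\F_p)$-conjugate $\Hg$'s if and only if the corresponding metacyclic subgroups $Z,Z'$ of $\F_{p^\ell}^*\rtimes\mathrm{Gal}(\F_{p^\ell}/\F_p)$ are of the same kind, i.e.\ $c=|Z\cap\F_{p^\ell}^*|=|Z'\cap\F_{p^\ell}^*|$ and the extensions $1\to C\to Z\to\Z/\ell\Z\to1$, $1\to C'\to Z'\to\Z/\ell\Z\to1$ are simultaneously split, or simultaneously non-split of the same type. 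Here one uses that $\Hg_{(\alpha,\beta)}$ is the image of the faithful irreducible representation $J_{(\alpha,\beta)}$, so that an abstract isomorphism of the images intertwining the two $\ell$-dimensional $\F_p$-representations is realised by conjugation inside $\mathrm{GL}(\ell,\F_p)$, and that such a metacyclic group is determined up to isomorphism by $c$ and the class of its defining extension; one must also check that exactly $\ell-1$ non-split types occur and are pairwise non-conjugate. The splitting is read off from the norm: writing $Z=\{(\alpha^i\beta^j,\phi_p^j)\}$ one gets $C=\langle\alpha,\beta^\ell\rangle$, and $Z$ has an element of order $\ell$ outside $C$ exactly when $N_{\F_{p^\ell}/\F_p}(\alpha^i\beta^j)=N(\alpha)^i\beta^{j\ell}=1$ for some $j\not\equiv0\pmod\ell$, i.e.\ by Hilbert~90 exactly when $\beta^\ell\in\langle N(\alpha)\rangle$. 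The number of admissible pairs $(\alpha,\beta)\in(\F_{p^\ell}^*\smallsetminus\F_p^*)\times\F_p^*$ giving a fixed $c$ is expressed in terms of $\psi(c,p-1)$ (again the Frobenius orbit of $\alpha$ has size $\ell$ since $\ell$ is prime); among these the split ones form a proportion $\lambda(c,p)$, the remaining non-split ones distributing evenly over the $\ell-1$ non-split types, and multiplying by $\tfrac{p^{\ell n_K}-1}{p-1}$ (the number of submodules isomorphic to $J_{(\alpha,\beta)}$, now $d=1$, $s=\ell$) and by $\tfrac1\ell$ gives the two displayed formulas for $n(\Hg)$.

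The main obstacle is precisely this non-abelian count: computing the proportion $\lambda(c,p)$ of admissible $(\alpha,\beta)$ with $\beta^\ell\in\langle N(\alpha)\rangle$, and proving the even distribution of the non-split pairs among exactly $\ell-1$ pairwise non-conjugate types. This reduces to a careful $\ell$-adic analysis, inside $\F_p^*$, of the orders of $\ker N$, of $\langle N(\alpha)\rangle$ and of $\beta^\ell\langle N(\alpha)\rangle$, which must be split according to whether $\ell\mid p-1$ and, when it does, according to the position of $v_\ell(c)$ relative to $v_\ell(p-1)$ and $v_\ell(p^\ell-1)$; this is what yields the three cases of \eqref{lambda}, and in particular the value $\tfrac1{\ell+1}$, which reflects the extra independent $\ell$-contribution available when $v_\ell(c)\le v_\ell(p-1)$. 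One should also verify the conjugacy criterion in the degenerate situations (for instance when $C$ strictly contains $\langle\alpha\rangle$) and keep careful track of the identifications $J_{(\alpha,\beta)}=J_{(\alpha^p,\beta)}=\dots=J_{(\alpha^{p^{\ell-1}},\beta)}$ throughout.
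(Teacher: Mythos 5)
The abelian half of your plan (the case $r=1$, $w=\ell$, giving the groups $\F_{p^\ell}^+\rtimes C$ and the count $\frac{1}{\ell}\psi(c,p-1)\frac{p^{\ell n_K}-1}{p^\ell-1}$) matches the paper's argument. The non-abelian half, however, contains a concrete error. Your identification of $\Hg_{(\alpha,\beta)}=\langle T_\alpha,V_\beta\rangle$ with the subgroup $Z=\langle\alpha,\beta\phi_p\rangle$ of $\F_{p^\ell}^*\rtimes\mathrm{Gal}(\F_{p^\ell}/\F_p)$ is not a homomorphism: $V_\beta^\ell=\beta I$, i.e. $V_\beta^\ell$ corresponds to $(\beta,\mathrm{id})$, while $(\beta,\phi_p)^\ell=(N_{\F_{p^\ell}/\F_p}(\beta),\mathrm{id})=(\beta^\ell,\mathrm{id})$; the correct identification sends $V_\beta$ to $(b,\phi_p)$ with $N_{\F_{p^\ell}/\F_p}(b)=\beta$. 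Consequently $C=Z\cap\F_{p^\ell}^*$ is $\langle\alpha,\beta\rangle$, of order $c=\mathrm{lcm}(\mathrm{ord}(\alpha),\mathrm{ord}(\beta))$ — this is what the paper uses and what makes the label $c$ and the count $\psi(c,p-1)$ consistent — and not your $\langle\alpha,\beta^\ell\rangle$; likewise the correct splitting criterion is $\beta\in C^{p^{\ell-1}+\cdots+p+1}$ (equivalently $\beta\in\langle N(\alpha),\beta^\ell\rangle$), not your $\beta^\ell\in\langle N(\alpha)\rangle$. The two criteria genuinely differ: take $\ell=2$, $p=5$, $\alpha$ of order $3$, $\beta=-1$; then every element of $\langle T_\alpha,V_{-1}\rangle$ outside the diagonal subgroup squares to $-I$, so $C\to Z$ does not split, whereas your criterion ($\beta^2=1\in\langle N(\alpha)\rangle=\{1\}$) declares it split. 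Since $\lambda(c,p)$ is by definition the proportion of admissible pairs satisfying the splitting condition, this error propagates into the final values of $n(\Hg)$.

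Beyond this, the part you yourself call the main obstacle is exactly where the paper's proof does its work, and your proposal only announces it: the reduction showing that a conjugation between $\Hg_{(\alpha,\beta)}$ and $\Hg_{(\alpha',\beta')}$ in $\mathrm{GL}(\ell,\F_p)$ can be taken to preserve the diagonal maximal cyclic subgroup (distinguishing the cases of one versus $\ell+1$ maximal cyclic subgroups), which yields the relation $\beta'=\gamma^{p^{\ell-1}+\cdots+p+1}\beta$ and hence the invariant $(c,\ \beta \bmod C^{p^{\ell-1}+\cdots+p+1})$; the computation that $(C\cap\F_p^*)/C^{p^{\ell-1}+\cdots+p+1}$ has order $1$ or $\ell$; the prime-by-prime analysis of $\F_{p^\ell}^*\times\F_p^*$ (the cases $z=1$ and $m=1$) that produces the three values of $\lambda(c,p)$, in particular $\frac{1}{\ell+1}$; and the equidistribution of the non-split pairs among the $\ell-1$ non-trivial classes. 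None of these steps is actually carried out in your proposal, so even after repairing the identification the formulas for $n(\Hg)$ remain unproved; the paper's proof supplies precisely these computations.
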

\begin{proof}
Since $\ell\nmid f_K$ from \eqref{dim}, the dimension of a representation of a quotient $\bar H$ over $\mathbb{F}_p$ is $\ell$ when one of $r,w$ is $1$ and the other $\ell$.\\
For $r=1$ and $w=\ell$, i.e. for the pairs $\alpha$, $\beta$ with $\alpha\in \F_p^*$ and $\beta\in \F_{p^\ell}^*$, it turns out again $\dim J_{(\alpha,\beta)}=s=1$ so, as above, the group $\Hg_{(\alpha,\beta)}$ acting in the representation is cyclic of order equal to the order of $(\alpha,\beta)$ in $(\mathbb{F}_p^*\times\mathbb{F}_{p^\ell}^*)\setminus (\mathbb{F}_p^*\times\mathbb{F}_p^*)$ and for each pair $(\alpha,\beta)$ there are $(p^{\ell n_K}-1)/(p^\ell -1)$ representations over $\F_p$ containing a unique copy of $J_{(\alpha,\beta)}$.\\
Let $c\mid p^\ell -1$ but $c\nmid p-1$, the possible pairs $(\alpha,\beta)$ in $\mathbb{F}_p^*\times\mathbb{F}_{p^\ell}^*$ of order $c$ are $\psi(c,p-1)$. Thus, similarly to above, if $C$ is the cyclic group of order $c$ in $\mathbb{F}_{p^\ell}^*$ then the number of classes of extensions whose normal closure has Galois group isomorphic to $\mathbb{F}_{p^\ell}^+\rtimes C$ is $$\frac{1}{\ell}\psi(c,p-1)\frac{p^{\ell n_K}-1}{p^\ell -1}.$$ 
For $r=\ell$ and $w=1$ we have $\dim J_{(\alpha,\beta)}=s=\frac{r}{(r,f_K)}=\ell$ and $J_{(\alpha,\beta)}$ is already defined over $\F_p$. It follows that the group $\Hg_{(\alpha,\beta)}$ coincides with the group of matrices which describe the action on $J_{(\alpha,\beta)}$.\\
Recalling that $q=p^{f_K}$ and $\ell\nmid f_K$, from the study of the representations of $H$ we have made at the beginning of section \ref{counting}, we find that the action on $J_{(\alpha,\beta)}$ is described by the matrices
\begin{equation*}
T_\alpha =
\begin{pmatrix}
\alpha\\
& \alpha ^p\\
& & \ddots\\
& & &\alpha^{p^{\ell-1}}
\end{pmatrix}
, \quad
V_\beta =
\begin{pmatrix}
& & &\beta\\
1\\
& \ddots\\
& & 1
\end{pmatrix}.
\end{equation*}
So $\Hg_{(\alpha,\beta)}=\langle T _\alpha,V_\beta\rangle$ is a non-abelian group. Moreover the number of representations contained in $F^*/{F^*}^p$ isomorphic to $J_{(\alpha,\beta)}$ is exactly $(p^{\ell n_K}-1)/(p-1)$. This number must be divided by $\ell$ when taking into account that the pairs $(\alpha,\beta)$, $(\alpha^p,\beta)$ $\dots$ $(\alpha^{p^{\ell-1}},\beta)$ in $\mathbb{F}_{p^\ell}^*\times\mathbb{F}_p^*$ give the same representation.\\
It remains to multiply by the number of pairs $(\alpha',\beta')$ such that $(\F_p^+)^\ell\rtimes\langle T_{\alpha'},V_{\beta'}\rangle\simeq (\F_p^+)^\ell\rtimes\langle T_\alpha,V_\beta\rangle$. For what we have remarked above, this means that we have to count the number of pairs $(\alpha',\beta')\in \F_{p^\ell}^*\times\F_p^*$ such that $\langle T_{\alpha'},V_{\beta'}\rangle$ and $\langle T_\alpha,V_\beta\rangle$ are conjugated in $\mathrm{GL}(\ell,\F_p)$ (note that this is equivalent to require that they are conjugated in $\mathrm{GL}(\ell,\F_{p^\ell})$).\\ 
It turns out that $\Hg_{(\alpha,\beta)}$ is conjugated to $\Hg_{(\alpha',\beta')}$ if only if there exists a matrix $M\in\mathrm{GL}(\ell,\F_p)$ such that $M^{-1}\Hg_{(\alpha,\beta)}M=\Hg_{(\alpha',\beta')}$ and diagonal matrices are sent in diagonal matrices.\\
In fact, the subgroup of the diagonal matrices of $\Hg_{(\alpha,\beta)}$ is generated by $T_\alpha$ and $V_\beta^\ell$, it is a maximal cyclic subgroup $\langle T_\gamma\rangle$ of $\Hg_{(\alpha,\beta)}$ of order equal to $\ord(\gamma)=\mathrm{lcm}(\ord(\alpha),\ord(\beta))$ and index $\ell$.
Suppose that $\Hg_{(\alpha,\beta)}$ and $\Hg_{(\alpha',\beta')}$ are conjugated, then the maximal cyclic subgroups of the diagonal matrices have the same order, and in particular they are equal since they are isomorphic to the same subgroup of $\F_{p^\ell}^*$ via the same map. Therefore $\Hg_{(\alpha,\beta)}=\langle T_\gamma, V_\beta\rangle$ and $\Hg_{(\alpha',\beta')}=\langle T_\gamma, V_{\beta'}\rangle$.\\
Now, $\Hg_{(\alpha,\beta)}$ and $\Hg_{(\alpha',\beta')}$ have either only one maximal cyclic subgroup, that is the subgroup of the diagonal matrices, or $\ell+1$ maximal cyclic subgroups (one of which is that of the diagonal matrices). In the first case, there is nothing to prove since clearly all the conjugations send diagonal matrices in diagonal matrices. For the second case, observe that by the proof of Theorem \ref{mainimpr}, $\Hg_{(\alpha,\beta)}$ is isomorphic to $\langle(\gamma,\mathrm{id}),(\gamma,\phi _p)\rangle\subseteq \F_{p^\ell}^*\rtimes\mathrm{Gal}(\F_{p^\ell}/\F_p)$, i.e. it is isomorphic to a subgroup of the normalizer of a Cartan subgroup of $\mathrm{GL}(\ell,\F_p)$.\\
The key point is to observe that there always exists a basis under which $(\gamma,\mathrm{id})$ and $(\gamma,\phi_p)$ are represented by the generators of any two cyclic subgroups of index $\ell$ of $\Hg_{(\alpha,\beta)}$ and every pairs of these matrices generate the whole group $\Hg_{(\alpha,\beta)}$.
This is enough to prove that we can suppose not only $\langle T_\gamma\rangle$ is sent in $\langle T_\gamma\rangle$ but also that $\langle T_{\alpha^\ell}, V_\beta\rangle$ goes in $\langle T_{\alpha'^\ell},V_{\beta'}\rangle$.\\
Therefore there exists a diagonal matrix $D=T_\alpha^i V_\beta^{\ell j}\in\Hg_{(\alpha,\beta)}$ such that $M^{-1}(D V_\beta)M=V_{\beta'}$. This leads to $$\beta'=\gamma^{p^{\ell-1}+\cdots +p+1}\beta,$$ where recall that $\gamma\in\F_{p^\ell}^*$ has order $\mathrm{lcm}(\ord(\alpha),\ord(\beta))=\mathrm{lcm}(\ord(\alpha'),\ord(\beta'))$.\\ 
Consequently the Galois group is identified by the order of the cyclic group $C=\langle\gamma\rangle\subseteq \F_{p^\ell}^*$ and the class of $\beta$ in $(C\cap \F_p^*)/C^{p^{\ell-1}+\cdots +p+1}$.\\
Let $c$ be the order of $C$.
Since
\begin{equation*}
(p-1,p^{\ell -1}+\cdots +p+1)=
\begin{cases}
\ell & \text{if $p\equiv 1\:(\mathrm{mod}\:\ell)$}\\
1 & \text{if $p\equiv 2,\dots,\ell -1\:(\mathrm{mod}\:\ell)$ or $p=\ell$}
\end{cases}
\end{equation*}
we have
\begin{equation*}
\left | \frac{C\cap \mathbb{F}_p^*}{C^{p^{\ell -1}+\cdots +p+1}}\right | =
\begin{cases}
\ell & \text{if $p\equiv 1\:(\mathrm{mod}\:\ell)$ and $0<v_\ell(c)<v_\ell(p^\ell -1)$}\\
1 & \text{otherwise}.
\end{cases}
\end{equation*}
Clearly, if $|(C\cap \F_p^*)/C^{p^{\ell -1}+\cdots+p+1}|=1$ then there is only one class $\mathrm{mod}\ C^{p^{\ell-1}+\cdots+p+1}$, therefore every pairs $(\alpha,\beta)$ such that $\alpha$ and $\beta$ generate $C$ give (class of) extensions with isomorphic Galois groups.\\
We now consider the case $|(C\cap \F_p^*)/C^{p^{\ell -1}+\cdots+p+1}|=\ell$.\\
Suppose $\ell^k\parallel c$, then $\beta\in C^{p^{\ell-1}+\cdots+p+1}$ if and only if its order is not divisible by a power of $\ell$ greater than $\ell^k/(\ell^k,p^{\ell-1}+\cdots+p+1)$.\\
On the other hand, we have that $\beta\in C^{p^{\ell-1}+\cdots+p+1}$ if and only if the map $C\hookrightarrow \langle T_\alpha,V_\beta\rangle$ splits (with abuse of notation, we have identified $C$ we its image in $\langle T_\alpha,V_\beta\rangle$). In fact, this map splits if and only if $\beta$ has an $\ell$-root in $C$, i.e. there exists $t\in C$ such that $t^\ell=\beta$. This is equivalent to say that there exists $r\in C$ such that $r^{p^{\ell-1}+\cdots+p+1}=\beta$ since $C^{p^{\ell-1}+\cdots+p+1}=(C\cap \F_p^*)^{p^{\ell-1}+\cdots+p+1}=(C\cap \F_p^*)^\ell$.\\
We write $\F_{p^\ell}^*\times\F_p^*$ as direct sum of $l$-groups for each prime $l$, and chose the components of $(\alpha,\beta)$ in this sum among the elements of the correct order. For what we have said above, for $l\neq \ell$ the choice of the $l$-component of $(\alpha,\beta)$ has no effect on the condition $\beta\in C^{p^{\ell-1}+\cdots +p+1}$.\\
For $l=\ell$, the $\ell$-part of $\F_{p^\ell}^*\times\F_p^*$ is $C_{\ell^{m+z}}\times C_{\ell^z}$ where $\ell^m \parallel p^{\ell-1}+\cdots +p+1$ and $\ell^z \parallel p-1$. Recall that $\ell^k \parallel c$ for some $1\leq k<m+z$.
Since we are in the case $(p-1,p^{\ell -1}+\cdots +p+1)=\ell$, we have $z=1$ or $m=1$.\\
If $z=1$ then $C^{p^{\ell-1}+\cdots+p+1}$ is trivial, therefore $\beta\in C^{p^{\ell-1}+\cdots +p+1}$ when the $\ell$-component of $(\alpha,\beta)$ is of type $(x,1)$, and this happens for $\frac{\phi (\ell^k)}{\ell\phi (\ell^k)}=\frac{1}{\ell}$ of the possible $\ell$-components when $k>1$ and for $\frac{\ell-1}{\ell^2 -1}=\frac{1}{\ell +1}$ of the possible $\ell$-components when $k=1$.\\
If $m=1$ then $\beta\in C^{p^{\ell-1}+\cdots +p+1}$ if its order is not divisible by a power of $\ell$ greater than $\ell^{k-1}$, therefore the $\ell$-component of $(\alpha,\beta)$ must be of type $(x,y)$ with the order of $x$ greater than the order of $y$; this is true for $\frac{\phi (\ell^k)\ell^{k-1}}{2\phi (\ell^k)\ell^k-{\phi (\ell^k)}^2}=\frac{1}{\ell+1}$ of the elements with correct order.\\
Recollecting all the informations achieved, we have that if $\lambda$ is the function defined in \eqref{lambda}, then $\lambda(c,p)\psi(c,p-1)$ is the number of pairs $(\alpha,\beta)$ such that the group $C$ generated by $\alpha$ and $\beta$ has order $c$ and $\beta\in C^{p^{\ell-1}+\cdots +p+1}$, while $(1-\lambda(c,p))\psi(c,p-1)$ counts the same pairs but with $\beta\notin C^{p^{\ell-1}+\cdots +p+1}$.\\
Finally, to conclude the proof it remains to observe that the $\beta$'s not contained in $C^{p^{\ell-1}+\cdots +p+1}$ are equally distributed in each of the $\ell-1$ non trivial class $\mathrm{mod}\ C^{p^{\ell-1}+\cdots+p+1}$.
\end{proof}

\section{Ramification groups and discriminant of the composite of all $p^\ell$-extensions of $K$}
We know that there is a one-to-one correspondence between the isomorphism classes of  extensions of degree $p^\ell$ of $K$ having no intermediate extensions and the irreducible $H$-submodules of dimension $\ell$ of $F^*/{F^*}^p$, or equivalently, the abelian $p^\ell$-extensions of $F$ of exponent $p$, Galois over $K$, having no intermediate subextensions that are Galois over $K$.\\
Denote by $\C_{p^\ell}$ the composite of all extensions of degree $p^\ell$ of $K$ having no intermediate fields. Clearly $\C_{p^\ell}/K$ is Galois, we want to determine the ramification groups of its Galois group.\\
Let $\G=\mathrm{Gal}(\C_{p^\ell}/K)$, as usual for $i\geq -1$ we denote by $\G_i$ the $i$-th ramification group of $\G$.
\begin{lemma}\label{C=A}
One has $$\C_{p^\ell}=\A$$
where $\A$ is the composite of all abelian $p^\ell$ extensions of $F$ of exponent $p$ having no other subextensions that are Galois over $K$.
\end{lemma}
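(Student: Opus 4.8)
The strategy is to reduce the lemma to the single statement $F\subseteq\C_{p^\ell}$. First I would show, as a formal consequence of Theorem~\ref{mainimpr}, that $\A=\C_{p^\ell}F$; then the identity $\C_{p^\ell}=\A$ becomes equivalent to $F\subseteq\C_{p^\ell}$, which is the real content.

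For $\A=\C_{p^\ell}F$: if $L/K$ is a $p^\ell$-extension with no intermediate fields, the proof of Theorem~\ref{mainimpr} shows that $LF/F$ is elementary abelian of degree $p^\ell$, that $LF/K$ is Galois (it equals $\tilde LF$ with $\tilde L/K$ and $F/K$ Galois), and that the associated module $\Xi\subseteq F^*/{F^*}^p$ is an irreducible $H$-module; by Kummer theory the last fact says exactly that $LF/F$ has no intermediate subextension Galois over $K$. Hence each $LF$ is one of the fields whose composite is $\A$, so $\C_{p^\ell}F\subseteq\A$. Conversely, let $M$ be one of the abelian exponent-$p$ extensions of $F$ of degree $p^\ell$, Galois over $K$ and with no intermediate subextension Galois over $K$, whose composite defines $\A$. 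Writing $M=F(\sqrt[p]{\Xi})$ by Kummer theory, $M/K$ Galois makes $\Xi$ an $H$-submodule of $F^*/{F^*}^p$ and the absence of intermediate Galois-over-$K$ fields makes $\Xi$ irreducible; applying the map $\Phi$ of the proof of Theorem~\ref{mainimpr} we obtain a $p^\ell$-extension $L/K$ with no intermediate fields whose attached module is $\Xi$, and the very construction of $\Phi$ gives $\mathrm{Gal}(M/K)\simeq S\rtimes H$ with $S=\mathrm{Gal}(M/F)$ and $L$ the fixed field of $H$; since $S\cap H=\{1\}$ one gets $\mathrm{Gal}(M/LF)=\{1\}$, i.e.\ $M=LF$. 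Thus the fields defining $\A$ are exactly the $LF$ with $L$ in the family defining $\C_{p^\ell}$, and taking composites gives $\A=\C_{p^\ell}F$.

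It remains to prove $F\subseteq\C_{p^\ell}$. The family of $p^\ell$-extensions of $K$ with no intermediate fields is stable under $K$-conjugation, so $\C_{p^\ell}/K$ is Galois; hence $\C_{p^\ell}$ contains the normal closure $\tilde L$ of every member $L$ of the family, and since $\tilde L=LL_1$ where $L_1$ is the tame subextension with $\mathrm{Gal}(L_1/K)\simeq\tilde H$, it contains every such $L_1$. Tracing through $\Phi$ one identifies $L_1$ with the fixed field $F^{\ker\rho_L}$ of the kernel of the representation $\rho_L$ of $H$ on the module $\Xi$ attached to $L$; therefore the composite of all the $L_1$, taken over the whole family, equals $F^{\bigcap_L\ker\rho_L}$, and this is contained in $\C_{p^\ell}$. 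So $F\subseteq\C_{p^\ell}$ will follow once we check that $\bigcap_L\ker\rho_L=\{1\}$, that is, that the sum of the irreducible $\ell$-dimensional $H$-submodules of $F^*/{F^*}^p$ is a faithful $H$-module. Here I would use the explicit description of $F^*/{F^*}^p$ from Section~\ref{counting}: $F$ is by definition the composite of the normal extensions of $K$ of degree prime to $p$ whose Galois group is a subgroup of $\F_{p^\ell}^*$ not contained in $\F_p^*$ or a non-abelian subgroup of $\F_{p^\ell}^*\rtimes\mathrm{Gal}(\F_{p^\ell}/\F_p)$, and these groups are precisely the possible images $\Hg_{(\alpha,\beta)}=H/\ker\rho$ of the admissible $J_{(\alpha,\beta)}$; since the module computation of Section~\ref{counting} shows that every such $J_{(\alpha,\beta)}$ occurs in $F^*/{F^*}^p$ with positive multiplicity, every quotient of $H$ corresponding to a constituent extension of $F$ is realised as some $H/\ker\rho_L$, whence $\bigcap_L\ker\rho_L=\{1\}$.

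The main obstacle is this last verification. One must run the case distinction of Section~\ref{counting} ($\ell\mid f_K$ versus $\ell\nmid f_K$), and in each case check that the pairs $(\alpha,\beta)$ giving $\F_p$-dimension $\ell$, namely those with $r,w\in\{1,\ell\}$ not both equal to $1$, produce exactly the list of cyclic subgroups of $\F_{p^\ell}^*$ not contained in $\F_p^*$ together with all non-abelian subgroups of $\F_{p^\ell}^*\rtimes\mathrm{Gal}(\F_{p^\ell}/\F_p)$ used to build $F$ --- equivalently, that every constituent extension of $F$ occurs as the tame part $L_1$ of some $p^\ell$-extension of $K$ with no intermediate fields. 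Everything else is formal Galois theory together with the correspondence of Theorem~\ref{mainimpr}.
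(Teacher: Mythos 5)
Your skeleton is the same as the paper's: both arguments come down to the single inclusion $F\subseteq\C_{p^\ell}$, obtained from the claim that the tame parts $L_1=F^{\ker\rho_L}$ of the normal closures of the extensions in the family generate $F$; your preliminary identification $\A=\C_{p^\ell}F$ is only a repackaging of the two containments the paper proves directly ($LF\subseteq\A$ for each $L$, and $E=E^{H}F$ with $E^{H}\subseteq\C_{p^\ell}$ for each $E$ defining $\A$). The paper dispatches the crux in one sentence (``as $E$ varies these subextensions generate $F$''), so you have correctly located where the real content sits.

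The one genuine gap is in your justification of $\bigcap_L\ker\rho_L=\{1\}$. Knowing that the prescribed group types (cyclic $C\subseteq\F_{p^\ell}^*$ not contained in $\F_p^*$, and non-abelian subgroups of $\F_{p^\ell}^*\rtimes\mathrm{Gal}(\F_{p^\ell}/\F_p)$) coincide with the possible isomorphism types of the images $\Hg_{(\alpha,\beta)}$, together with the positivity of the multiplicities from Section \ref{counting}, does not yet show that ``every quotient of $H$ corresponding to a constituent extension of $F$ is realised as some $H/\ker\rho_L$'': distinct normal subgroups $N_1\neq N_2$ of $H$ can have isomorphic quotients of the prescribed type, and what is needed is that each \emph{specific} subgroup $N=\mathrm{Gal}(F/M)$, for $M$ a constituent of $F$, actually occurs as a kernel --- you must realise the quotient maps, not merely the quotient types. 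The same imprecision persists in your closing plan, which again speaks of reproducing the \emph{list of groups} used to build $F$. The gap is closable along the lines you gesture at, but the correct statement to extract from Section \ref{reprirr} is stronger: by \eqref{barMi} and the remark that every residue class modulo $e_{F/K}$ is hit by some $i\in[[0,I_F]]$, the representations occurring in $\bigoplus_i M_i$ are all the $J_{(\alpha,\beta)}$ with $\alpha$ of order dividing $e_{F/K}$ and $\beta$ of order dividing $f/s$, i.e.\ \emph{every} irreducible representation of $H$, hence every irreducible $\F_p[H]$-module of dimension $\ell$, occurs in $F^*/{F^*}^p$. Then for each constituent $M$ one takes a faithful irreducible $\F_p$-representation of $H/\mathrm{Gal}(F/M)$ of dimension $\ell$ (it exists precisely because $\mathrm{Gal}(M/K)$ is of the prescribed type, as in the proof of Theorem \ref{mainimpr}); its pullback to $H$ occurs in $F^*/{F^*}^p$, corresponds via $\Phi$ to an extension $L$ of the family, and has kernel exactly $\mathrm{Gal}(F/M)$, so $M$ is one of the $L_1$'s. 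Since $F$ is by definition the composite of its constituents, these kernels have trivial intersection, which is the statement you need.
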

\begin{proof}
If $L/K$ is a $p^\ell$-extension having no intermediate fields then $LF$ is contained in $\A$ (see proof of Theorem \ref{mainimpr}), therefore $\C_{p^\ell}\subseteq \A$.\\
Conversely, let $E$ be an abelian $p^\ell$-extension of $F$ of exponent $p$, Galois over $K$ but having no subextensions that are Galois over $K$. As seen in the proof of Theorem \ref{mainimpr}, $\mathrm{Gal}(E/K)\simeq \Xi \rtimes H$ where $\Xi $ is the $\F_p[H]$-submodule of $F^*/{F^*}^p$ of dimension $\ell$ which corresponds to $E/K$, $E^{H}$ is contained in $\C_{p^\ell}$ and $E^{H}F=E$.\\
The Galois closure of $E^{H}$ over $K$ is also contained in $\C_{p^\ell}$ (being $\C_{p^\ell}/K$ Galois) and it is the composite of $E^{H}$ with a suitable subextension of $F/K$, which is then contained in $\C_{p^\ell}$. Since as $E$ varies through the elements of $\A$ with $E/K$ Galois these subextensions generate $F$, we have $F\subseteq \C_{p^\ell}$ and hence $E\subseteq \C_{p^\ell}$.\\
Finally observe that these extensions $E$ are sufficient to generate $\A$, thus we have $\A\subseteq \C_{p^\ell}$.
\end{proof}
Lemma \ref{C=A} implies that $\C_{p^\ell}/F$ is a subextension of $\mathscr{A}_{F}/F$, where $\mathscr{A}_{F}$ is the maximal abelian extension of $F$ of exponent $p$. Thus $[\C_{p^\ell}:F]=p^t$ for some $t\leq n_{F}+2$ (see \cite{DD}). By Kummer theory we know that $\C_{p^\ell}$ corresponds to a subgroup of $F^*/{F^*}^p$ and $[\C_{p^\ell}:F]$ is equal to the order of this subgroup (clearly $F$ contains the $p$-th roots of $1$).
\begin{prop}
One has $$[\C_{p^\ell}:F]=
\begin{cases}
p^{((p^\ell-1)^2-(p-1)^2)n_K} & \text{if}\ \ell\mid f_K\\
p^{(\ell+1)(p^\ell-p)(p-1)n_K} & \text{if}\ \ell\nmid f_K\\
\end{cases}.$$ 
\end{prop}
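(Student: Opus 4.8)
The plan is to turn the statement into a dimension count. As recalled just before the statement, $\C_{p^\ell}=F(\sqrt[p]{\Xi})$ for a subgroup $\Xi\subseteq F^*/{F^*}^p$, and $[\C_{p^\ell}:F]=|\Xi|=p^{\dim_{\F_p}\Xi}$; so the whole problem is to compute $\dim_{\F_p}\Xi$. First I would identify $\Xi$. By Lemma~\ref{C=A} we have $\C_{p^\ell}=\A$, and $\A$ is the composite over $F$ of all the fields $F(\sqrt[p]{\Xi_E})$, where $\Xi_E$ runs over the irreducible $H$-submodules of dimension $\ell$ of $F^*/{F^*}^p$ (Theorem~\ref{mainimpr}). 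Since $F(\sqrt[p]{\Delta_1})\,F(\sqrt[p]{\Delta_2})=F(\sqrt[p]{\Delta_1+\Delta_2})$, this gives $\Xi=\sum_E\Xi_E$: the sum, inside $F^*/{F^*}^p$, of all the irreducible $H$-submodules of dimension $\ell$.

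Next, because $p\nmid\ord(H)$, the $\F_p[H]$-module $F^*/{F^*}^p$ is semisimple (Maschke), so $\Xi$ is the internal direct sum of the isotypic components attached to the distinct isomorphism types $X$ of irreducible $\F_p[H]$-modules of dimension $\ell$ that occur in $F^*/{F^*}^p$, and therefore
\[
\dim_{\F_p}\Xi=\sum_X\dim_{\F_p}\big(X\text{-isotypic component}\big).
\]
By the description recalled in Section~\ref{counting}, each such $X$ is the sum of the $\F_p$-conjugates of an induced representation $J_{(\alpha,\beta)}$ with $s:=s(X)=\dim_{\overline\F_p}J_{(\alpha,\beta)}\in\{1,\ell\}$, and the number of $\F_p[H]$-submodules of $F^*/{F^*}^p$ isomorphic to $X$ was computed there to be $\frac{p^{ds n_K}-1}{p^d-1}$, where $D=\F_{p^d}=\mathrm{End}_{\F_p[H]}(X)$. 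Since the number of submodules isomorphic to $X$ inside $X^{\oplus a}$ is $\frac{|D|^{a}-1}{|D|-1}=\frac{p^{da}-1}{p^d-1}$, the multiplicity of $X$ in $F^*/{F^*}^p$ is $a=s n_K$, so the $X$-isotypic component has $\F_p$-dimension $a\cdot\dim_{\F_p}X=s(X)\,n_K\,\ell$. Hence $\dim_{\F_p}\Xi=n_K\,\ell\sum_X s(X)$, and it remains only to run over the types.

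For that last sum I would reuse the orbit counts already carried out in Section~\ref{counting}. When $\ell\mid f_K$ every type has $s=1$, and the types correspond bijectively to the $\langle p\rangle$-orbits (all of size $\ell$) of the pairs $(\alpha,\beta)\in\F_{p^\ell}^*\times\F_{p^\ell}^*$ with $(\alpha,\beta)\notin\F_p^*\times\F_p^*$; there are $\tfrac1\ell\big((p^\ell-1)^2-(p-1)^2\big)$ of them, whence $\dim_{\F_p}\Xi=\big((p^\ell-1)^2-(p-1)^2\big)n_K$. When $\ell\nmid f_K$ one has two families: the types with $s=1$ (from $r=1$, $w=\ell$), in bijection with the $\tfrac1\ell(p-1)(p^\ell-p)$ Frobenius orbits of pairs in $\F_p^*\times(\F_{p^\ell}^*\setminus\F_p^*)$, each contributing $\ell n_K$ to the dimension; and the types with $s=\ell$ (from $r=\ell$, $w=1$), in bijection with the $\tfrac1\ell(p^\ell-p)(p-1)$ orbits of pairs in $(\F_{p^\ell}^*\setminus\F_p^*)\times\F_p^*$, each contributing $\ell^2 n_K$. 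Summing the two contributions gives $\dim_{\F_p}\Xi=(p-1)(p^\ell-p)n_K+\ell(p^\ell-p)(p-1)n_K=(\ell+1)(p^\ell-p)(p-1)n_K$.

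The step I expect to require the most care is the passage in the second paragraph: one must not confuse the \emph{number} of submodules isomorphic to $X$ (which carries the large factor $\frac{p^{ds n_K}-1}{p^d-1}$) with the \emph{dimension} of the $X$-isotypic component (which is only $s(X)\,n_K\,\ell$), and one must be sure the isotypic components are genuinely in direct sum — this is where $p\nmid\ord(H)$ is used. The cancellation of the large factor is precisely what makes the final exponent linear in $n_K$, and the bound $t\le n_F+2$ recalled before the statement is a handy consistency check on the arithmetic.
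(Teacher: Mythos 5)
Your proof is correct and follows essentially the same route as the paper: reduce via Lemma \ref{C=A} and Kummer theory to computing the dimension of the submodule of $F^*/{F^*}^p$ generated by the irreducible $\ell$-dimensional $\F_p[H]$-submodules, then use semisimplicity and the multiplicities $s\,n_K$ of the $J_{(\alpha,\beta)}$ from Section \ref{counting}, split according to $\ell\mid f_K$ or $\ell\nmid f_K$. Your phrasing via isotypic components and $\mathrm{End}_{\F_p[H]}(X)=\F_{p^d}$ is just a slightly more explicit version of the paper's count of $m$ disjoint irreducible summands, and the resulting exponents agree.
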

\begin{proof}
By Lemma \ref{C=A}, $[\C_{p^\ell}:F]=[\A:F]$ and $\A=F(\sqrt[p]{\Delta})$, where $\Delta$ is the $\F_p[H]$-submodule of $F^*/{F^*}^p$ generated by the irreducible submodules of dimension $\ell$. It follows that $[\C_{p^\ell}:F]$ is the order of $\Delta$.\\
$\Delta$ can be generated by a finite number of disjoint irreducible $\F_p[H]$-submodules of dimension $\ell$, clearly if $m$ is this number then $[\C_{p^\ell}:F]=p^{\ell m}$.\\
We know that $$F^*/{F^*}^p\simeq \F_p\oplus\bigoplus_{i\in [[0,I_F]]}M_i\oplus M_\omega $$
as $\F_p[H]$-modules; the $\F_p[H]$-modules of dimension $\ell$ are contained in $\bigoplus_{i\in [[0,I_F]]}M_i$. Moreover we know that a similar module $V$ is the sum of the conjugates of a certain $J_{(\alpha,\beta)}$ which is an $\overline\F_p[H]$-submodule of $\overline{M}_i=M_i\oplus_{\F_p}\overline\F_p$; as $\alpha$ and $\beta$ vary in $\F_{p^\ell}^*$ we find all the irreducible representation of dimension $\ell$ contained in $F^*/{F^*}^p$. The multiplicity of $V$ in $\bigoplus_{i\in [[0,I_F]]}M_i$ is equal to that of $J_{(\alpha,\beta)}$ in $Y=\bigoplus_{i\in [[0,I_F]]}\overline{M}_i$, it counts the number of disjoint $\F_p$-vector space (resp. $\overline \F_p$-vector space) in $F/{F^*}^p$ on which $H$ acts as on $V$ (resp. $J_{(\alpha,\beta)}$).\\
In particular, if $\ell\mid f_K$ then to achieve $\F_p$-representation of dimension $\ell$ we must have $(\alpha,\beta)\in (\F_{p^\ell}^*\times\F_{p^\ell}^*)\setminus (\F_p^*\times\F_p^*)$ and for each of these pairs $J_{(\alpha,\beta)}$ has dimension $1$ and multiplicity $n_K$. As usual observe that the $\F_p$-representation containing $J_{(\alpha,\beta)}$ is equal to that containing its other $\ell-1$ conjugates that is $J_{(\alpha^p,\beta^p)},\dots,J_{(\alpha^{p^{\ell-1}},\beta^{p^{\ell-1}})}$. Therefore we have $$m=\frac{1}{\ell}[(p^\ell-1)^2-(p-1)^2]n_K.$$
If $\ell\nmid f_K$ then only one between $\alpha$ and $\beta$ is in $\F_{p^\ell}^*\setminus \F_p^*$ and the other in $\F_p^*$. The $J_{(\alpha,\beta)}$'s with $\alpha\in\F_{p^\ell}^*$ and $\beta\in\F_p^*$ have no conjugates different from itself and multiplicity $\ell n_K$, but $J_{(\alpha,\beta)}=J_{(\alpha^p,\beta)}=\dots =J_{(\alpha^{p^{\ell-1}},\beta)}$. While the $J_{(\alpha,\beta)}$'s with $\alpha\in\F_p^*$ and $\beta\in\F_{p^\ell}^*$ have multiplicity $n_K$ and $\ell$ conjugates like the previous case. Thus we have
$$m=(p^\ell-p)(p-1)n_K+\frac{1}{\ell}(p-1)(p^\ell-p)n_K.$$
Substituting in $[\C_{p^\ell}:F]=p^{\ell m}$ we obtain the thesis.
\end{proof}
Observe that by the Theorem \ref{mainimpr} $\G\simeq G\rtimes H$, where $G$ is the Galois group of $\C_{p^\ell}$ over $F$.
It is well known that $\G_{-1}=\G$, $\G_{0}$ is the inertia subgroup of $\G$ and $\G_1$ is the $p$-Sylow subgroup of $\G_0$. Since $\C_{p^\ell}/F$ has no unramified subextensions being the composite of totally ramified extensions, $\G_0$ is isomorphic to $G\rtimes H_0$ where $H_0$ is the inertia subgroup of $H$, while $\G_i\simeq G_i$ for all $i\geq 1$ since $F/K$ is tame.\\
It is suitable to search the upper numbering ramification groups because, if $L$ is a Galois extension of $F$ contained in $\C_{p^\ell}$ and $G_L=\mathrm{Gal}(\C_{p^\ell}/L)$ then for all $v$ one has $(G/G_L)^v\simeq G^v G_L/G_L$ (see \cite{Ser}, Ch. IV, \S 3, Prop. 14) and, in our case, if also $L$ has degree $p^\ell$ over $F$ and is Galois over $K$ the groups $(G/G_L)^v$ are easy to describe.
\begin{lemma}\label{ram on F}
Let
\begin{equation*}
d=
\begin{cases}
((p^\ell-1)^2-(p-1)^2)n_K & \text{if $\ell\mid f_K$}\\
(\ell+1)(p^\ell-p)(p-1)n_K & \text{if $\ell\nmid f_K$}
\end{cases}
\end{equation*}
One has
\begin{equation*}
\mathrm{dim}_{\F_p}G^v=
\begin{cases}
d & \text{if $-1\leq v\leq 1$}\\
d-\left(\lceil v\rceil-\left\lceil\frac{v}{p}\right\rceil\right)f_F & \text{if $1<v\leq \frac{pe_F}{p-1}-1$}\\
0 & \text{if $v>\frac{pe_F}{p-1}-1$}
\end{cases}
\end{equation*}
\end{lemma}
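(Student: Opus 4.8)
The plan is to pass, via Lemma~\ref{C=A} and the Proposition just proved, to the Kummer picture, and then to read the upper ramification filtration of $G=\mathrm{Gal}(\C_{p^\ell}/F)$ off the way the module cutting out $\C_{p^\ell}$ sits inside the unit filtration of $F^*/{F^*}^p$. First I would record the set-up: by Lemma~\ref{C=A} and the Proposition, $\C_{p^\ell}=\A=F(\sqrt[p]{\Delta})$, where $\Delta$ is the $\F_p[H]$-submodule of $F^*/{F^*}^p$ generated by the irreducible submodules of dimension $\ell$, so $\dim_{\F_p}\Delta=d$ and, by Kummer theory, $G\simeq\mathrm{Hom}(\Delta,\mu_p)$ is elementary abelian of $\F_p$-dimension $d$. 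Since $\C_{p^\ell}/F$ is abelian its upper-numbering filtration has integer jumps (Hasse--Arf), which is why only $\lceil v\rceil$ enters; and since $\C_{p^\ell}/F$ is a compositum of totally ramified extensions it is totally ramified, so $G^{v}=G_{0}=G$ for $-1\le v\le 1$ provided the smallest break turns out to be $\ge 1$.

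The second step is to express $G^v$ through the valuation filtration. Recall from §\ref{reprirr} that $F^*/{F^*}^p\simeq\langle\pi\rangle/\langle\pi\rangle^p\oplus\bigoplus_{j\in[[0,I_F]]}M_j\oplus M_\omega$ with $M_j=U_jU_1^p/U_{j+1}U_1^p$ and $I_F=pe_F/(p-1)$. Because $F^*/{F^*}^p$ is a semisimple $\F_p[H]$-module and $\Delta$ is a sum of isotypic components, $\Delta$ splits compatibly: $\Delta=\bigoplus_{j\in[[0,I_F]]}\Delta_j$ with $\Delta_j=\Delta\cap M_j$ (it meets neither $\langle\pi\rangle/\langle\pi\rangle^p$ nor $M_\omega$, where $H$ acts in dimension $1$). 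Now I invoke two classical facts: the ramification of a degree-$p$ Kummer extension (see e.g.\ \cite{FV}), namely that for $0\ne x\in M_j$ the extension $F(\sqrt[p]{x})/F$ has its unique ramification break at $I_F-j$; and the compatibility of the upper numbering with quotients (\cite{Ser}, Ch.~IV, §3, Prop.~14 — the very reason, pointed out before the statement, for working with the upper numbering). Combining these with $\Delta\cap U_mU_1^p/{F^*}^p=\bigoplus_{j\ge m}\Delta_j$ and with the Kummer duality above, one obtains $G^v\simeq\bigoplus_{j\in[[0,I_F]],\,j\le I_F-v}\Delta_j^{\vee}$, that is,
\[
\dim_{\F_p}G^v=\sum_{\substack{j\in[[0,I_F]]\\ j\le I_F-v}}\dim_{\F_p}\Delta_j .
\]
In particular the breaks of $G$ lie among $\{\,I_F-j:j\in[[0,I_F]]\,\}=[[0,I_F]]$, which already yields the three ranges in the statement, and the whole problem is reduced to computing $\dim_{\F_p}\Delta_j$ for each $j$.

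The third and decisive step is the claim that $\dim_{\F_p}\Delta_j=f_F$ for every $j\in[[0,I_F]]$; granting this, and writing $g(v)=\lceil v\rceil-\lceil v/p\rceil$, which one checks equals $\#\{\,j\in[[0,I_F]]:I_F-v<j<I_F\,\}$ (using $p\mid I_F$), the displayed identity becomes $\dim_{\F_p}G^v=d-g(v)f_F$ on $1<v\le I_F-1$ and $0$ beyond, as required; note this also forces $d=n_F$, which is consistent since $|[[0,I_F]]|=e_F$ and the Proposition computes $[\C_{p^\ell}:F]$. To establish the claim I would first pin down the structure of $F$ from the list of tame building blocks defining it — its ramification index and residue degree over $K$ in the two regimes $\ell\mid f_K$ and $\ell\nmid f_K$ — and then use the module description of §§\ref{counting},~\ref{reprirr}: $\overline M_j=M_j\otimes_{\F_p}\overline\F_p\simeq\bigoplus_\beta J_{(\zeta^j,\beta)}^{f_K}$, and by Lemma~\ref{repr} an irreducible $\F_p$-constituent has dimension $\mathrm{lcm}(rw/(r,f_K),r)$ with $r=[\F_p(\zeta^j):\F_p]$, $w=[\F_p(\beta):\F_p]$. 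Summing the dimensions of the constituents of dimension exactly $\ell$ — running through the possibilities $(r,w)\in\{(1,\ell),(\ell,1),(\ell,\ell)\}$ exactly as in §\ref{counting}, and verifying that for the field $F$ at hand every admissible $\beta$ actually occurs in $\overline M_j$ — should give the uniform value $f_F=f_{F/K}f_K$. I expect this uniform control of $\dim_{\F_p}\Delta_j$ over all $j\in[[0,I_F]]$, extracted from the precise shape of $F$, to be the real obstacle; the reduction to it (Kummer duality, the break $I_F-j$, functoriality of the upper numbering) is essentially formal.
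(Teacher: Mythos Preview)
Your proposal is correct and lands on the same computation as the paper, but the packaging is genuinely a bit different. The paper argues ``from above'': it writes $G^v=\bigcap_{(G/G_L)^v=1}G_L$, passes to the fixed field $\C_{p^\ell}^{G^v}=\prod L$, and then invokes results from \cite{CDC} (their Proposition~7 to reduce the jumps of $L/F$ to those of degree-$p$ subextensions, Theorem~12 for the vanishing beyond $pe_F/(p-1)$, and Proposition~14 together with \S\ref{reprirr} to count the contribution at each fixed jump $t$). You argue ``from below'': you split $\Delta$ along the unit filtration as $\Delta=\bigoplus_{j\in[[0,I_F]]}\Delta_j$, use the standard fact that a class of exact level $j$ produces a Kummer break at $I_F-j$, and read off $G^v$ as the Kummer dual of $\bigoplus_{j\le I_F-v}\Delta_j$. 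Both routes funnel into the same decisive claim, which in your language is $\dim_{\F_p}\Delta_j=f_F$ for every $j\in[[0,I_F]]$ and in the paper's language is ``$f_F$ (dimensions worth of) extensions with a jump at $t$''. What you gain is self-containment: you do not need the external input from \cite{CDC}, only the level/break formula and Herbrand compatibility already recalled. What the paper gains is brevity at that exact point, by outsourcing the jump calculus.

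One remark on the step you flag as the obstacle. Your claim $\dim\Delta_j=f_F$ is equivalent to $\Delta_j=M_j$, i.e.\ \emph{every} irreducible $\F_p[H]$-constituent of $M_j$ has dimension $\ell$, for \emph{every} $j\in[[0,I_F]]$; in particular it forces $d=n_F$. This is not automatic from \S\ref{reprirr} alone, because $r=[\F_p(\zeta^j):\F_p]$ genuinely varies with $j$, and one must check that for the specific (split, tame) $F$ fixed in \S\ref{counting} the pair $(r,w)$ never produces a constituent of dimension different from $\ell$. So when you ``pin down the structure of $F$'' you really do need to compute $e_{F/K}$ and $f_{F/K}$ in each regime and verify, level by level, that the admissible $\beta$'s fill out all of $\overline M_j$; the paper hides the same verification behind the reference to \cite{CDC} and \S\ref{reprirr}. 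With that caveat, your outline is sound and matches the paper's argument in substance.
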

\begin{proof}
First of all observe that $\C_{p^\ell}/F$ is a totally and wildly ramified extension, therefore $G_{-1}=G_0=G_1=G$; moreover a simple calculation of the Herbrand's function (see \cite{Ser}, Ch. IV, \S 3) yields to $G^1=G_1$ so that for $-1\leq v\leq 1$ one has $G^v=G=\mathrm{Gal}(\C_{p^\ell}/F)$ and $\mathrm{dim}_{\F_p}G^v=d$. Moreover, since $\C_{p^\ell}/F$ is an elementary abelian $p$-extension, by Theorem 12 of \cite{CDC}, for every $v>{pe_F}/{p-1}$ we have $G^v=1$, i.e. $\mathrm{dim}_{\F_p}G^v=0$. It remains to determine $\mathrm{dim}_{\F_p}G^v$ when $1<v\leq {pe_F}/{p-1}$.\\
Let $L$ be an abelian $p^\ell$-extension of $F$ of exponent $p$ having no subextensions that are Galois over $K$ and let $G_L=\mathrm{Gal}(\C_{p^\ell}/L)$. \\
As remarked before, we know that for all $v$ one has $(G/G_L)^v\simeq G^v G_L/G_L$, hence
$$G^v=\bigcap_{(G/G_L)^v=1}G_L.$$
Via Galois theory, this group corresponds to $\C_{p^\ell}^{G^v}=\prod L$, with the composite taken over the abelian $p^\ell$-extensions $L/F$ satisfying the previous condition, i.e. such that $(G/G_L)^v=1$. It follows that $\mathrm{dim}_{\F_p}G^v$ is equal to $d$ minus the dimension of the $\F_p$-vector space associated to $\prod L$ by Kummer theory.\\
Since $G/G_L\simeq \mathrm{Gal}(L/F)\simeq (\Z/p\Z)^\ell$, by Proposition 7 in \cite{CDC} $L/F$ has an upper ramification jump in $t$ if and only if there exists a subextension $N/F$ of degree $p$ with upper ramification jump equal to $t$; moreover the jumps can only appear in the integers $t$ such that $1\leq t\leq {pe_F}/{p-1}$ and $(t,p)=1$. Therefore, for all $v>{pe_F}/{p-1}$, $(G/G_L)^v=1$ for every abelian $p^\ell$-extension $L/F$.\\
Let $v$ be a real number such that $1<v\leq {pe_F}/{p-1}$, then $G^v=G^{\lceil v\rceil}$ and $G^{\lceil v\rceil}$ is the subgroup of $G$ fixing the composite of all the abelian $p^\ell$-extensions $L/F$ which have a jump in $\mathcal{J}=\{1,\dots,\lceil v\rceil -1\}$. Using Proposition 14 of \cite{CDC} and the analysis of the 
possible representation associated to $L$ we have made in section \ref{reprirr}, we find that there are $f_F$ disjoint extensions $L/F$ which has a jump in a fixed $t\in\mathcal{J}$ and $(t,p)=1$. Since the possible $t$ are $\lceil v\rceil-\left\lceil\frac{v}{p}\right\rceil$, we have the thesis.
\end{proof}
Note that our results is in agreement with Lemma 9 of \cite{DD}.\\
We can now prove the following
\begin{prop}
Let $d$ be as in previous Lemma \ref{ram on F}.
The ramification groups $\G_i$ of $\C_{p^\ell}/K$ are:
\begin{align*}
\G_{-1} & = \G = G\rtimes H  &{}\\
\G_0   & = G\rtimes H_0 &{}\\
\G_i  & =(\Z/p\Z)^{d-kf_F} & \text{if\ }  & t(k-1)<i\leq t(k)\\
\G_i & = \{e\}   & \text{if\ } & i>t(e_F-1)
\end{align*}
where $t(-1)=0$, $t(0)=1$ and for every $1\leq k\leq e_F-1$,
\begin{equation*}
t(k)=
\begin{cases}
t(k-1)+p^{kf_F} & \text{if $k\not\equiv 0\ (\mathrm{mod}\ p-1)$}\\
t(k-1)+2p^{kf_F} & \text{if $k\equiv 0\ (\mathrm{mod}\ p-1)$}\\
\end{cases}
\end{equation*}
Therefore there are $e_F+2$ jumps in the lower ramification groups.
\end{prop}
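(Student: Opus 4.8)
The plan is to reduce the statement to the lower-numbering ramification filtration of $\C_{p^\ell}/F$ and then to obtain that filtration from Lemma~\ref{ram on F} by inverting the Herbrand function. Put $G=\mathrm{Gal}(\C_{p^\ell}/F)$, so $\G\simeq G\rtimes H$ and $G\simeq(\Z/p\Z)^{d}$. As noted just before the statement, $\G_{-1}=\G$ and $\G_0=G\rtimes H_0$ (because $\C_{p^\ell}/F$ is totally ramified, being a composite of totally ramified extensions, and $F/K$ is tame with inertia $H_0$), while $\G_i=G_i$ for every $i\ge 1$ (the $p$-Sylow of $\G_0$ is $G$, since $|\G_0/G|=|H_0|$ is prime to $p$, so $\G_i=\G_i\cap G=G_i$); in particular $\G_1=G$, which is the $k=0$ case. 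It therefore remains to compute the lower groups $G_i$ of $\C_{p^\ell}/F$ for $i\ge 2$, and I would do this via $G^v=G_{\psi(v)}$, where $\psi=\psi_{\C_{p^\ell}/F}$ and $G^v$ is given by Lemma~\ref{ram on F}.

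For the conversion: as $\C_{p^\ell}/F$ is totally ramified one has $G_0=G$, hence off the jumps $\psi'(v)=(G_0:G^v)=p^{\,d-\dim_{\F_p}G^v}$, so $\psi(v)=\int_0^v p^{\,d-\dim_{\F_p}G^w}\,dw$ and $\psi(v)=v$ for $v\le 1$. By Lemma~\ref{ram on F}, $\dim_{\F_p}G^v$ equals $d$ on $[-1,1]$, equals $d-(\lceil v\rceil-\lceil v/p\rceil)f_F$ on $(1,\tfrac{pe_F}{p-1}-1]$, and is $0$ beyond; since $v\mapsto\lceil v\rceil-\lceil v/p\rceil$ increases by $1$ exactly at the positive integers not divisible by $p$, the upper jumps of $G$ are those integers that lie under the bound $pe_F/(p-1)$. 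Each interval $(mp,(m+1)p]$ contributes $p-1$ of them, and a short estimate gives exactly $e_F$ jumps in all; call them $v_0=1<v_1<\dots<v_{e_F-1}$, so that $v_k=(k+1)+\lfloor k/(p-1)\rfloor$ and $v_k-v_{k-1}$ equals $1$, except that it equals $2$ precisely when a multiple of $p$ is skipped between $v_{k-1}$ and $v_k$, i.e. when $(p-1)\mid k$.

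On $(v_{k-1},v_k]$ one has $\dim_{\F_p}G^v=d-kf_F$, so $\psi$ is affine there with slope $p^{kf_F}$. Hence, putting $t(-1)=\psi(0)=0$ and $t(k)=\psi(v_k)$, one gets $t(0)=\psi(1)=1$ and $t(k)=t(k-1)+(v_k-v_{k-1})p^{kf_F}$ --- precisely the recursion in the statement. Reading $G^v=G_{\psi(v)}$ backwards: for $t(k-1)<i\le t(k)$ the group $\G_i=G_i$ is $G^{v_k}$, of $\F_p$-dimension $d-kf_F$; and for $i>t(e_F-1)$ the upper group is already trivial, so $\G_i=\{e\}$. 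Each $v_k$, hence each $t(k)$, is an integer (as it must be, by Hasse--Arf, $\C_{p^\ell}/F$ being abelian), so the lower filtration is integer-indexed as claimed; adjoining $\G_{-1}$ and $\G_0$ completes the list, and the distinct terms --- $\G_{-1}$, $\G_0$, the $e_F$ groups attached to $t(0),\dots,t(e_F-1)$, and $\{e\}$ --- yield $e_F+2$ jumps.

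I expect the genuine work to be the bookkeeping at the top: checking that exactly $e_F$ admissible upper jumps fit under the bound and that the filtration drops straight to $\{e\}$ after $v_{e_F-1}$. This forces a compatibility between the ramification bound $pe_F/(p-1)$ in Lemma~\ref{ram on F} and the value of $d$ computed earlier: when $(p-1)\nmid e_F$ the endpoint $pe_F/(p-1)-1$ is not an integer, so one must have $d-(e_F-1)f_F=0$ (the dimension already vanishes there), whereas when $(p-1)\mid e_F$ the last jump $v_{e_F-1}$ lands on $pe_F/(p-1)-1\in\Z$ and $d-(e_F-1)f_F$ may be positive; either way the conclusion is the stated one. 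Everything else --- the shape of $\psi$, the identity $\G_i=G_i$, and the arithmetic of the recursion --- is routine once this is settled.
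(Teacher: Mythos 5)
Your proof is correct and follows essentially the paper's own route: the identifications $\G_{-1}=\G$, $\G_0=G\rtimes H_0$, $\G_i=G_i$ for $i\ge 1$ (tameness of $F/K$), and then the inversion of the Herbrand function $\psi(v)=\int_0^v(G^0:G^w)\,dw$ against Lemma \ref{ram on F}, which is exactly the ``simple calculation'' the paper leaves implicit, and your bookkeeping $v_k=(k+1)+\lfloor k/(p-1)\rfloor$, $t(k)=t(k-1)+(v_k-v_{k-1})p^{kf_F}$ with gap $2$ precisely when $(p-1)\mid k$ reproduces the stated recursion and the $e_F+2$ jumps. The only loose end you flag, the case $(p-1)\nmid e_F$ (where your fallback $d-(e_F-1)f_F=0$ would actually yield one jump too few, not the stated count), never occurs: $F$ contains the $p$-th roots of unity (this is needed for the Kummer-theoretic correspondence throughout the paper), so $(p-1)\mid e_{F/\Q_p}=e_F$, the bound $pe_F/(p-1)-1$ is an integer prime to $p$, and your count of exactly $e_F$ admissible upper jumps, with the last one landing on $pe_F/(p-1)-1$, goes through as written.
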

\begin{proof}
The claim for $\G_{-1}$ and $\G_0$ is clear by the considerations we have made before, $\G_1=G_1$ since $\C_{p^\ell}/F$ is wildly ramified and so $F/K$ is the maximal tame subextension. The rest of the Proposition follows by Lemma \ref{ram on F} with simple calculations, recalling that $\G_i=G_i$ for all $i\geq 1$ and the Herbrand's function (see \cite{Ser}, Ch. IV, \S 3)
$$G^v=G_{\psi (v)}\quad \text{with}\quad \psi(v)=\int_{0}^{v}(G^0:G^w)dw$$
which relates the upper and the lower ramification filtrations.\\
Lemma \ref{ram on F} says that $G$ has jumps in the upper ramification in every integers of $[[0,\frac{pe_F}{p-1}]]$, i.e. in the integers of the set $\{1\leq v\leq \frac{pe_F}{p-1}-1\mid v\not\equiv 0\ (\mathrm{mod}\ p)\}$; every such jump of $G$ gives a jump in the lower ramification of $\G$, and these together with $-1$ and $0$ are exactly the $e_F+2$ jumps of $\G$.
\end{proof}
Finally we determine the discriminant $\mathfrak{d}_{\C_{p^\ell}/K}$ of $\C_{p^\ell}/K$.
\begin{prop}
If $\pi_K$ is a uniformizer of $K$ and $d=[\C_{p^\ell}:F]$ as in the previous Lemma \ref{ram on F} then $$\mathfrak{d}_{\C_{p^\ell}/K}=(\pi _K)^\alpha$$
where
$$\alpha=f_{F/K}\left(\Bigl([F:K]-1+\frac{p(e_F+1)-1}{p-1}\Bigr)p^d-1-\frac{p^{n_F}-1}{p^{f_F}-1}-\frac{p^{n_F}-1}{p^{(p-1)f_F}-1}\right).$$
\end{prop}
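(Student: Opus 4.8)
The plan is to read off the discriminant from the ramification filtration of $\G=\mathrm{Gal}(\C_{p^\ell}/K)$ computed in the previous Proposition, using the classical expression of the different through the lower-numbering ramification groups. Writing $\mathfrak{D}$ for the different of $\C_{p^\ell}/K$ (an ideal of $\C_{p^\ell}$), one has $v_{\C_{p^\ell}}(\mathfrak{D})=\sum_{i\ge 0}(|\G_i|-1)$ (\cite{Ser}, Ch.~IV), and from $\mathfrak{d}_{\C_{p^\ell}/K}=N_{\C_{p^\ell}/K}(\mathfrak{D})$ one gets $v_K(\mathfrak{d}_{\C_{p^\ell}/K})=f_{\C_{p^\ell}/K}\cdot v_{\C_{p^\ell}}(\mathfrak{D})$. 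Since $\C_{p^\ell}/F$ is totally ramified (it is a composite of totally ramified extensions) and $F/K$ is tame, $f_{\C_{p^\ell}/K}=f_{F/K}$, which already accounts for the overall factor $f_{F/K}$ in $\alpha$. Equivalently, and perhaps more cleanly, one may pass through the tower $K\subseteq F\subseteq \C_{p^\ell}$ via transitivity of the different: $\mathfrak{d}_{\C_{p^\ell}/K}=\mathfrak{d}_{F/K}^{[\C_{p^\ell}:F]}\,N_{F/K}(\mathfrak{d}_{\C_{p^\ell}/F})$, where $\mathfrak{d}_{F/K}=\mathfrak{p}_K^{(e_{F/K}-1)f_{F/K}}$ is the tame discriminant and $v_F(\mathfrak{d}_{\C_{p^\ell}/F})=\sum_{i\ge 0}(|G_i|-1)$ with $G=\mathrm{Gal}(\C_{p^\ell}/F)$; the previous Proposition gives $G_i=\G_i$ for $i\ge1$ and $G_0=G_1=G$, so the two routes coincide and I would use whichever keeps the bookkeeping lightest.

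Next I would substitute the explicit groups from the previous Proposition. The $i=0$ term contributes $|\G_0|-1=e_{F/K}\,p^{d}-1$, where $p^{d}=[\C_{p^\ell}:F]$ (the exponent $d$ being the one of the previous Lemma and Proposition). For $i\ge1$ the filtration is constant equal to $(\Z/p\Z)^{\,d-kf_F}$ on each block $t(k-1)<i\le t(k)$, $0\le k\le e_F-1$, so that
$$\sum_{i\ge1}(|\G_i|-1)=\sum_{k=0}^{e_F-1}\bigl(t(k)-t(k-1)\bigr)\bigl(p^{\,d-kf_F}-1\bigr).$$
The key observation is that the block length $t(k)-t(k-1)$ equals $p^{kf_F}$, or $2p^{kf_F}$ when $(p-1)\mid k$ (and equals $1$ for the exceptional block $k=0$): in the products the powers of $p$ cancel, $\bigl(t(k)-t(k-1)\bigr)p^{\,d-kf_F}=p^{d}$ (resp.\ $2p^{d}$), so summing over $k$ yields a clean multiple of $p^{d}$, the extra copies for the blocks with $(p-1)\mid k$ being exactly what shifts the plain coefficient into the form $\tfrac{p(e_F+1)-1}{p-1}$. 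The subtracted $1$'s add up to $t(e_F-1)$, and unwinding the recursion for $t(k)$ splits this into two geometric progressions — one over all $k$ with ratio $p^{f_F}$, one over the multiples of $p-1$ with ratio $p^{(p-1)f_F}$ — which (using $n_F=e_F f_F$) sum to $\tfrac{p^{n_F}-1}{p^{f_F}-1}$ and $\tfrac{p^{n_F}-1}{p^{(p-1)f_F}-1}$. Adding the $i=0$ term, multiplying by $f_{F/K}$, and in the tower version adding the tame contribution $[\C_{p^\ell}:F](e_{F/K}-1)f_{F/K}$, then collecting the coefficient of $p^{d}$, produces the asserted value of $\alpha$.

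The only genuine difficulty is the bookkeeping, and it must be done with care. One has to treat separately: the anomalous first block $k=0$, whose length is $1$ rather than a power of $p$ (it realizes $\G_1=G$ and is responsible for the shift from $\tfrac{pe_F}{p-1}$ to $\tfrac{p(e_F+1)-1}{p-1}$); the doubling of the block length at the indices $k$ divisible by $p-1$, which is precisely what produces the third summand $\tfrac{p^{n_F}-1}{p^{(p-1)f_F}-1}$; the last block, whose right endpoint $t(e_F-1)$ is the final lower jump, beyond which $\G$ is trivial; and, throughout, the distinction between the $K$-relative quantities ($f_{F/K}$, and the $[F:K]-1$, $e_{F/K}-1$ coming from the tame subextension) and the quantities $n_F,f_F,e_F$ that govern the exponential sums. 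After carrying out the telescoping of the $p^{d}$-terms and evaluating the two geometric series, one is left with a rational expression in $p$ that collapses to the compact closed form displayed in the statement; I do not expect any conceptual obstruction beyond performing these simplifications carefully.
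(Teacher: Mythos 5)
Your strategy is the same as the paper's: apply $v_{\C_{p^\ell}}(\mathfrak{D}_{\C_{p^\ell}/K})=\sum_{i\ge 0}(|\G_i|-1)$ to the filtration of the previous Proposition, then pass to the discriminant via the norm, the overall factor $f_{F/K}$ coming from $f_{\C_{p^\ell}/K}=f_{F/K}$. The problem is that the one step you defer as ``bookkeeping'' --- the final collapse to the displayed closed form --- is exactly where the argument fails. Carrying out your own plan: the $i=0$ term is, as you say, $|\G_0|-1=e_{F/K}p^{d}-1$ (note the paper's proof instead writes $[F:K]p^{d}-1$, i.e.\ the order of $\G$ rather than of the inertia subgroup $\G_0=G\rtimes H_0$); the block $k=0$ contributes $p^{d}-1$, and for $1\le k\le e_F-1$ each block contributes $p^{d}-p^{kf_F}$, doubled when $(p-1)\mid k$. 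Using $(p-1)\mid e_F$ (since $\zeta_p\in F$), the subtracted powers sum to $\frac{p^{n_F}-1}{p^{f_F}-1}-1$ and $\frac{p^{n_F}-1}{p^{(p-1)f_F}-1}-1$, and one gets
\begin{equation*}
\sum_{i\ge 0}(|\G_i|-1)=\Bigl(e_{F/K}-1+\tfrac{pe_F}{p-1}\Bigr)p^{d}-\frac{p^{n_F}-1}{p^{f_F}-1}-\frac{p^{n_F}-1}{p^{(p-1)f_F}-1}.
\end{equation*}
Multiplying by $f_{F/K}$ does \emph{not} give the stated $\alpha$: since $\frac{p(e_F+1)-1}{p-1}=\frac{pe_F}{p-1}+1$, the statement asserts the exponent $f_{F/K}\bigl(([F:K]+\tfrac{pe_F}{p-1})p^{d}-1-\tfrac{p^{n_F}-1}{p^{f_F}-1}-\tfrac{p^{n_F}-1}{p^{(p-1)f_F}-1}\bigr)$, which exceeds what your computation yields by $f_{F/K}\bigl(([F:K]-e_{F/K}+1)p^{d}-1\bigr)$. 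So your outline, executed faithfully (by either of your two equivalent routes --- the direct sum or the tower with the tame contribution $p^{d}(e_{F/K}-1)f_{F/K}$), proves a formula different from the one in the statement; the claim that it ``collapses to the compact closed form displayed'' is not correct as written.

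The mismatch is not an artifact of your (correct) identification of $\G_0$: even the paper's own displayed intermediate sum, with first term $[F:K]p^{d}-1$, simplifies to $\bigl([F:K]-1+\tfrac{pe_F}{p-1}\bigr)p^{d}-\tfrac{p^{n_F}-1}{p^{f_F}-1}-\tfrac{p^{n_F}-1}{p^{(p-1)f_F}-1}$, which still differs from the stated $\alpha/f_{F/K}$ by $p^{d}-1$. So there is a genuine discrepancy between the ramification data of the previous Proposition and the exponent $\alpha$ as displayed, and a proof of this Proposition must confront it rather than defer it: either the computation is done as you set it up and the closed form in the statement is corrected accordingly (coefficient $e_{F/K}-1+\tfrac{pe_F}{p-1}$ of $p^{d}$, no standalone $-1$), or one must locate an extra contribution (there is none in the filtration as stated: the different formula starts at $i=0$, and the $k=0$ block has length exactly $1$) justifying the terms $[F:K]$, the extra $+1$ in $\tfrac{p(e_F+1)-1}{p-1}$, and the stray $-1$. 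As it stands, your proposal cannot be accepted as a proof of the displayed statement.
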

\begin{proof}
Using the well known formula
\begin{equation*}
v_{\C_{p^\ell}}(\mathfrak{D}_{\C_{p^\ell}/K})=\sum_{i=0}^{\infty}(|\G_i|-1)
\end{equation*}
where $\mathfrak{D}_{\C_{p^\ell}/K}$ is the different of $\C_{p^\ell}/K$, one gets
\begin{align*}
v_{\C_{p^\ell}}(\mathfrak{D}_{\C_{p^\ell}/K}) & =[F:K]p^d-1+\sum_{k=0}^{e_F-1}{(p^{d-kf_F}-1)p^{kf_F}}+\\
& +\sum_{\substack{k=1\\ k\equiv 0\ (\mathrm{mod}\ p-1)}}^{e_F-1}{(p^{d-kf_F}-1)p^{kf_F}}.
\end{align*}
The conclusion follows by a simple calculation, recalling that $\mathfrak{d}_{\C_{p^\ell}/K}=N_{\C_{p^\ell}/K}(\mathfrak{D}_{\C_{p^\ell}/K})$ where $N_{\C_{p^\ell}/K}$ is the norm map and observing that, since $\C_{p^\ell}/F$ is totally ramified, $N_{\C_{p^\ell}/K}(\pi_{\C_{p^\ell}})=(\pi_K)^{f_{F/K}}$ ($\pi_{\C_{p^\ell}}$ a uniformizer of $\C_{p^\ell}$).
\end{proof}
\bibliographystyle{plain}
\nocite{*}
\bibliography{biblios}
\end{document}